\numberwithin{equation}{section}
\def\today{\number\day\space\ifcase\month\or   January\or February\or
   March\or April\or May\or June\or   July\or August\or September\or
   October\or November\or December\fi\   \number\year}
\theoremstyle{definition}
\newtheorem{thm}{Theorem}[section]
\newtheorem{lem}[thm]{Lemma}
\newtheorem{prp}[thm]{Proposition}
\newtheorem{dfn}[thm]{Definition}
\newtheorem{cor}[thm]{Corollary}
\newtheorem{rmk}[thm]{Remark}
\newtheorem{exa}[thm]{Example}
\newtheorem{qst}[thm]{Question}
\newcommand{\beq}{\begin{equation}}
\newcommand{\eeq}{\end{equation}}
\newcommand{\beqr}{\begin{eqnarray*}}
\newcommand{\eeqr}{\end{eqnarray*}}
\newcommand{\bal}{\begin{align*}}
\newcommand{\eal}{\end{align*}}
\newcommand{\bei}{\begin{itemize}}
\newcommand{\eei}{\end{itemize}}
\newcommand{\af}{\alpha}
\newcommand{\bt}{\beta}
\newcommand{\gm}{\gamma}
\newcommand{\ep}{\varepsilon}
\newcommand{\Z}{{\mathbb{Z}}}
\newcommand{\R}{{\mathbb{R}}}
\newcommand{\C}{{\mathbb{C}}}
\newcommand{\N}{{\mathbb{Z}}_{> 0}}
\newcommand{\Nz}{{\mathbb{Z}}_{\geq 0}}
\newcommand{\id}{{\operatorname{id}}}
\newcommand{\diag}{{\operatorname{diag}}}
\newcommand{\Aut}{{\operatorname{Aut}}}
\newcommand{\Ad}{{\operatorname{Ad}}}
\newcommand{\tr}{{\operatorname{tr}}}
\newcommand{\RD}{{\mathrm{dim}}_{\mathrm{Rok}}^{\mathrm{c}}}
\newcommand{\OT}{{\mathcal{O}}_{2}}
\newcommand{\andeqn}{\qquad {\mbox{and}} \qquad}
\newcommand{\ifo}{if and only if}
\newcommand{\ca}{$C^*$-algebra}
\newcommand{\I}{\infty}
\newcommand{\Ann}{\mathrm{Ann}}
\newcommand{\indG}{\mathrm{ind}_G}
\title[Values of Rokhlin dimension]{Values of Rokhlin dimension for actions of compact  groups}   
\author{Ilan Hirshberg}
\author{N.~Christopher Phillips}
\date{27 July 2023}
\address{Department of Mathematics, Ben-Gurion University of the Negev,
	Be'er Sheva, Israel.}
\address{Department of Mathematics, University  of Oregon,
       Eugene OR 97403-1222, USA.}
\subjclass[2020]{Primary 46L40;
 Secondary 46L55, 46L80.}
\thanks{This material is based upon work partially supported by the
 Simons Foundation Collaboration Grant for Mathematicians \#587103, by the US National Science Foundation under Grant DMS-2055771 and by the US-Israel Binational Science Foundation.}
\begin{document}

\begin{abstract}
We show that any finite group admits actions on simple AF algebras with unique trace which have arbitrarily large finite values of Rokhlin dimension with commuting towers. We show similar results for actions of compact Lie groups, with AH algebras with no dimension growth in place of AF algebras. We also relate Rokhlin dimension to the $G$-index for actions of compact Lie groups on commutative $C^*$-algebras.
\end{abstract}

\maketitle

\section{Introduction}
Rokhlin dimension for actions of finite groups and of $\Z$ on $C^*$-algebras was introduced in \cite{HWZ}, as a generalization of the Rokhlin property. This notion has been generalized to compact group actions (\cite{Gardella-compact}), residually finite groups (\cite{SWZ}) and flows (\cite{HSWW}). The main motivation for studying Rokhlin dimension was that it serves as a regularity property for actions which is very useful as a way to establish various permanence properties for crossed products. 

Rokhlin dimension comes in two versions, with and without commuting towers. 
When introduced, the distinction came about as a technicality, as the stronger 
commuting tower assumption was needed for some permanence results but not for 
others; it was not immediately clear whether those properties are different. 
For finite groups, it turned out that they indeed are. For example, it was 
shown in \cite[Corollary 4.8]{HP15} that there are no actions of finite groups 
on the Jiang-Su algebra $\mathcal{Z}$ or the Cuntz algebra 
$\mathcal{O}_{\infty}$ which have finite Rokhlin dimension with commuting 
towers. However, without the commuting tower assumption, all outer actions of 
$\Z_2$ on $\mathcal{O}_{\infty}$ have Rokhlin dimension 1 (\cite[Theorem 
3.3]{BEMSW}) and any strongly outer action of a finite group on $\mathcal{Z}$ 
has Rokhin dimension at most 2 (\cite[Theorem A]{GHV}). 
(We note in passing that the situation for actions of $\Z$ appears to be different; see \cite[Theorem 6.2]{wouters2023equivariant}.)  These examples showed that the two notions of Rokhlin dimension are different, in that one is finite and one is not. Example 4.28 of \cite{GdHbSg} provides an action of $\Z_2$ on a simple AF algebra such that its Rokhin dimension with commuting towers is exactly 2. In \cite[Example 4.29]{GdHbSg}, building on the previously mentioned example, it is shown that there exists an action of $\Z_2$ on a Kirchberg algebra such that its Rokhlin dimension with commuting towers is 2, and its Rokhlin dimension without commuting towers is 1. This thus far is the only known example in which both dimensions are finite but are different.

It was shown in \cite[Theorem A]{GHV} that strongly outer actions of finite groups on simple nuclear unital separable $\mathcal{Z}$-stable $C^*$-algebras whose trace space is a Bauer simplex with finite dimensional extreme boundary have Rokhlin dimension at most $2$ without commuting towers.  Thus, the only possible values lie in $\{0,1,2\} \cup \{\infty\}$. We do not know whether the value $2$ can in fact occur. This dimension collapse phenomenon mirrors the dimension collapse phenomenon for nuclear dimension, proved in \cite[Corollary C]{CETWW}. 

The motivation for the current work comes from \cite[Examples 4.28, 4.29]{GdHbSg}. We show that for any finite group we can construct actions on simple AF algebras with unique trace which have arbitrarily large but finite Rokhlin dimension with commuting towers. We obtain similar results for non-discrete compact Lie group actions, but then we get AH algebras with no dimension growth. We find sharper bounds for actions of groups which have $\Z_2$ or $S^1$ as factors. This shows that the dimension collapse phenomenon does not occur in this setting, and opens up possible follow-up questions. We outline a few here.
\begin{qst}
	 Given a $C^*$-algebra $A$ and a compact Lie group $G$, what values of Rokhlin dimension with commuting towers can occur for actions of $G$ on $A$?  
\end{qst}
We know that no finite value can occur for $A = \mathcal{Z}$ or $A = \mathcal{O}_{\infty}$. It was shown in \cite[Theorem 4.34]{GdHbSg} that the value $1$ cannot occur for actions of $\Z_2$ on separable unital $C^*$-algebras which tensorially absorb the $2^{\infty}$ UHF algebra, but it is not known whether any other finite non-zero value can occur. Are there examples in which this set of numbers is an infinite proper subset of $\Nz$? Are there examples of pairs of $C^*$-algebras $A$ and finite groups $G$ for which all natural numbers occur?
Is this set of values an interesting invariant for the $C^*$-algebra $A$?
As a specific test question, one might ask the following.
\begin{qst}
	Does there exist a unital separable $C^*$-algebra $A$ which absorbs the $2^{\infty}$ UHF algebra tensorially and an action of $\Z_2$ on $A$ which has finite but strictly positive Rokhlin dimension with commuting towers?
\end{qst}
The fact that the value of Rokhlin dimension with commuting towers can be arbitrarily large means that this dimension can be viewed as a meaningful cocycle conjugacy invariant of the group action, and, in a sense, a measure of its complexity. It thus makes sense to try to conduct a finer study of actions with specific values, rather than find general theorems which follow just from the fact that the action is finite. 
\begin{qst}
A classification of actions of finite groups on UCT Kirchberg algebras with the Rokhlin property was done in \cite{Izm2}. Can one find a similar classification for actions of finite groups which have Rokhlin dimension 1 with commuting towers? If so, one would then ask about Rokhlin dimension 2, etc.
\end{qst}
\begin{qst}
	Find $K$-theoretic or other properties of crossed products which are implied by having specific value of Rokhlin dimension. 
\end{qst}
For example, suppose $G = \Z_2$ and suppose $A$ is an AF algebra. In a computation in the middle of \cite[Example 4.28]{GdHbSg}, it is shown that the crossed product of $A$ by an action $\alpha$ of $\Z_2$ with $\RD(\alpha) \leq 1$ has torsion free $K$-theory, whereas for the example in question, the torsion group of $K_0 (A \rtimes_{\alpha} \Z_2)$ is $\Z_2$. As a finer concrete follow-up question, one could ask whether having Rokhlin dimension $n$ implies some restriction on the order of torsion in the $K$-theory of the crossed product.

In Section~\ref{Sec_Basics}, we describe some notation, and prove the basic 
technical theorems on equivariant $K$-theory which are needed for the 
constructions. Our main examples are covered in Section~\ref{Sec_Exs}. In 
Section~\ref{sec:tensor-product-actions} we discuss tensor products and a 
K{\"u}nneth-type formula, which is used to construct examples of actions of 
groups which have $S^1$ or $\Z_2$ as a factor. Section~\ref{sec:commutative} is 
somewhat unrelated to the rest of the paper. We show there that all values of 
finite Rokhlin dimension can occur for actions on commutative $C^*$-algebras, 
by showing that Rokhlin dimension coincides with the so-called $G$-index of an 
action, shifted by $1$; this relies on an equivariant semiprojectivity argument.

\section{Preliminaries and results on which the constructions rely}\label{Sec_Basics}

In this paper all groups are compact (and often finite).
Lie groups are not required to be connected unless explicitly stated;
thus, all finite groups are Lie groups. The notation 
$\Z_n$ is used for the finite cyclic group $\Z / n \Z$, not the $n$-adic 
integers. When we say that an action $\alpha$ of a locally compact group $G$ on 
a a $C^*$-algebra $A$ is continuous we mean that $\alpha$ is point-norm 
continuous, that is, for any $a \in A$, the map $G \to A$ given by $g \mapsto 
\alpha_g(a)$ is continuous. We denote by $R(G)$ the representation ring of $G$. 
We denote by $I(G)$ the augmentation ideal.

	Let $A$ be a separable $C^*$-algebra. We take $A_{\infty} = l^{\infty}(A)/c_0(A)$ and view $A$ as embedded in $A_{\infty}$ as the image of the constant sequences. We denote by $\Ann(A) \subseteq A_{\infty} \cap A'$ the annihilator of $A$. 
	  This is an ideal in the central sequence algebra $A_{\infty} \cap A'$, and we define $F_{\infty}(A) = ( A_{\infty} \cap A' ) / \Ann(A)$. This distinction is relevant only when $A$ is non-unital: if $A$ is unital then $\Ann(A) = 0$. The $C^*$-algebra $F_{\infty}(A)$ is necessarily unital even if $A$ is not. We refer the reader to \cite{Kirchberg-central} for more details on these algebras. We restrict our attention to unital $C^*$-algebras here, in which case $F_{\infty}(A)$ is simply identified with $A_{\infty} \cap A'$. We nonetheless use this notation here as it is occurs in references.
	
	Suppose $G$ is a locally compact Hausdorff group, and let $\alpha \colon G 
	\to \Aut(A)$ be a continuous action. The action $\alpha$ induces a 
	coordinate-wise action on $l^{\infty}(A)$. This leaves  $c_0(A)$ invariant, 
	and therefore induces an action on $A_{\infty}$, which leaves the central 
	sequence algebra invariant; furthermore, the ideal $\Ann(A)$ is invariant, 
	so we obtain an induced action on $F_{\infty}(A)$. However, the action on 
	$l^{\infty}(A)$, and therefore on $F_{\infty}(A)$, need not be continuous. 
	We denote by $F_{\infty}^{(\alpha)}(A)$ the subalgebra of elements on which 
	$G$ acts continuously. By \cite[Theorem 11]{Brown-continuity}, this 
	coincides with the image  in the quotient of the subalgebra 
	$l^{\infty,(\alpha)}(A)$ of points on which $G$ acts continuously. 
	Likewise, we denote by $A_{\infty}^{(\alpha)}$ the subalgebra of elements 
	in $A_{\infty}$ on which $G$ acts continuously. 
	
	The following is taken from \cite[Definition 1.6]{HP15}. 

\begin{dfn}
	Let $G$ be a second countable compact Hausdorff group, and let $X$ be a second countable compact Hausdorff space endowed with a free action of $G$. Let $A$ be a separable $C^*$-algebra, and let $\alpha \colon G \to \Aut(A)$ be a continuous action. We say that $\alpha$ has the $X$-Rokhlin property if there exists a unital equivariant homomorphism from $C(X)$ to $F_{\infty}^{(\alpha)}(A)$. 
\end{dfn}

\begin{rmk}
	Suppose $A$ is unital. We find it more convenient to use the following equivalent characterization of the $X$-Rokhlin property: the action $\alpha$ has the $X$-Rokhlin property if and only if for any finite $F \subseteq A$, for any finite $S \subseteq C(X)$, and for any $\ep>0$, there exists a $G$-equivariant unital completely positive map $\varphi \colon C(X) \to A$ such that: 
	\begin{enumerate}
		\item $\| \varphi(a)\varphi(b) - \varphi(ab) \|<\ep$ for all $a,b \in S$.
		\item $\| [ \varphi(a) , x ] \| < \ep$ for all $a \in S$ and for all $x \in F$.
	\end{enumerate}
	This characterization follows easily from the Choi-Effros Lifting theorem (which applies since $C(X)$ is abelian). 
\end{rmk}

\begin{rmk}
	Suppose $A$ is a unital $C^*$-algebra, $X$ is a second countable compact Hausdorff space endowed with a free action of $G$, and $\alpha \colon G \to \Aut(A)$ is an action which has the $X$-Rokhlin property. If $B$ is any other $C^*$-algebra, then the action $\alpha \otimes \id_B$ of $G$ on $A \otimes_{\max} B$ has the $X$-Rokhlin property as well. 
	
	Furthermore, if $C$ is a finite dimensional $C^*$-algebra, then for any  finite $F \subseteq A$, for any finite $S \subseteq C \otimes C(X)$, and for any $\ep>0$, there exists a $G$-equivariant unital completely positive map $\varphi \colon C(X) \to A$ such that:
	\begin{enumerate}
		\item $\| \varphi(a)\varphi(b) - \varphi(ab) \|<\ep$ for all $a,b \in S$.
		\item $\| [ ( \id_C \otimes \varphi ) (a) , 1_C \otimes x ] \| < \ep$ for all $a \in S$ and for all $x \in F$.
	\end{enumerate}
We use those facts freely in the sequel.
\end{rmk}
We recall the definition of the join of two compact Hausdorff spaces.
\begin{dfn}
	 Let $X$ and $Y$ be compact Hausdorff spaces. We define an equivalence relation on $X \times Y \times [0,1]$ by setting $ (x,y,0) \sim (x',y,0)$ and $(x,y,1) \sim (x,y',1)$ for all $x,x' \in X$ and for all $y,y' \in Y$, and define
	\[
	X \star Y = ( X \times Y \times [0,1]  ) / { \sim } \,  .
	\]
If $X$ and $Y$ come equipped with left actions of a group $G$, then there is a naturally induced action of $G$  on the join: the action $g \cdot (x,y,t) = (g \cdot x , g \cdot y , t)$ on $X \times Y \times [0,1]$ descends to an action on the join. In particular, the action of $G$ on itself by left translation induces an action on the $k$-fold join $G^{\star k} = G \star G \star \cdots \star G$
(with $k$ copies of~$G$) for any $k \in \N$.
\end{dfn}

We recall the following explicit identification of the universal space for 
actions with finite Rokhlin dimension with commuting towers; see for example 
\cite[Lemma 4.3]{szabo-model-actions} for details.
\begin{lem}\label{L_2601_XRP}
Let $G$ be a compact group and let $A$ be a unital $C^*$-algebra.
An action $\af \colon G \to \Aut (A)$
satisfies $\RD (\af) \leq d$ \ifo{} $\alpha$ has the $X$-Rokhlin property
with $X$ being the $(d+1)$-fold join $G^{\star (d + 1)} = G \star G \star \cdots \star G$.
\end{lem}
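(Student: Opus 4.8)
The plan is to match both conditions with the same concrete data, namely a system of $d+1$ commuting equivariant order zero maps out of $C(G)$, and then to invoke the \emph{universal property} of the join. Recall that $\RD(\af) \le d$ means that there exist equivariant completely positive contractive order zero maps $\eta_0, \dots, \eta_d \colon C(G) \to F_{\infty}^{(\alpha)}(A)$ with pairwise commuting ranges and $\sum_{l=0}^d \eta_l(1) = 1$. On the geometric side, write a point of $X = G^{\star(d+1)}$ as a formal convex combination $\bigoplus_{l=0}^d t_l g_l$ with $t_l \ge 0$, $\sum_l t_l = 1$, and $g_l \in G$ relevant only when $t_l > 0$, and define coordinate maps $\vartheta_l \colon C(G) \to C(X)$ by $\vartheta_l(\kappa)\big( \bigoplus_m t_m g_m \big) = t_l \, \kappa(g_l)$, with the value $0$ when $t_l = 0$ (which is exactly what makes $\vartheta_l(\kappa)$ continuous across the collapse). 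A direct check shows that each $\vartheta_l$ is an equivariant completely positive contractive order zero map, that $\sum_l \vartheta_l(1)$ is the constant function $1$, and that the ranges commute since $C(X)$ is abelian.

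The crux is the universal property: $C(X)$ is generated by $\bigcup_l \vartheta_l(C(G))$ and is universal among unital $C^*$-algebras carrying $d+1$ commuting order zero maps from $C(G)$ with $\sum_l \eta_l(1) = 1$. Granting this, both implications are formal. If $\RD(\af) \le d$, apply the universal property to the given $\eta_l$ to obtain a unital $*$-homomorphism $\varphi \colon C(X) \to F_{\infty}^{(\alpha)}(A)$ with $\varphi \circ \vartheta_l = \eta_l$; since each $\eta_l$ is equivariant and the $\vartheta_l$ generate, $\varphi$ is equivariant, and it lands in the continuous part $F_{\infty}^{(\alpha)}(A)$ because the $\eta_l$ do, yielding the $X$-Rokhlin property. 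Conversely, given a unital equivariant $\varphi \colon C(X) \to F_{\infty}^{(\alpha)}(A)$ witnessing the $X$-Rokhlin property, the maps $\eta_l := \varphi \circ \vartheta_l$ are equivariant, completely positive, contractive and order zero (a $*$-homomorphism preserves these), have commuting ranges (the $\vartheta_l(C(G))$ already commute in $C(X)$), and satisfy $\sum_l \eta_l(1) = \varphi(1) = 1$; hence $\RD(\af) \le d$.

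I expect the \textbf{main obstacle} to be the universal property itself, specifically the nontrivial direction that any system of $d+1$ commuting order zero maps $\eta_l$ from $C(G)$ with $\sum_l \eta_l(1) = 1$ factors through $C(X)$. Here I would invoke the Winter--Zacharias structure theorem for order zero maps to replace each $\eta_l$ by a genuine $*$-homomorphism on the cone $C_0\big((0,1]\big) \otimes C(G) \cong C_0\big((0,1] \times G\big)$; the commuting ranges let these be assembled into a single $*$-homomorphism out of the maximal tensor product of the $d+1$ cones, and imposing $\sum_l \eta_l(1) = 1$ cuts the domain down to functions supported where the heights $t_l$ sum to $1$. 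The geometric content to verify is precisely that this quotient is $C(X)$, i.e.\ that collapsing the $g_l$-coordinate as $t_l \to 0$ realizes the join; Gelfand duality then finishes the identification, and one checks that the two $G$-actions agree under left translation. For finite $G$ this can be made fully explicit: $G^{\star(d+1)}$ is the $d$-dimensional simplicial complex with vertex set $\{0, \dots, d\} \times G$, the functions $\vartheta_l(e_g)$ (for indicators $e_g \in C(G)$) are the barycentric coordinates, and the order zero relations reduce to the familiar Rokhlin tower relations $\vartheta_l(e_g)\vartheta_l(e_h) = 0$ for $g \ne h$ together with $\sum_{l,g} \vartheta_l(e_g) = 1$, which renders the universal property transparent.
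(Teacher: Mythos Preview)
The paper does not actually prove this lemma: it is stated as a recalled fact with a pointer to \cite[Lemma~4.3]{szabo-model-actions} for details. Your argument is correct and is essentially the standard one underlying that reference---identify $C(G^{\star(d+1)})$ as the universal unital $C^*$-algebra receiving $d+1$ pairwise commuting equivariant c.p.c.\ order zero maps from $C(G)$ whose values at~$1$ sum to~$1$, via the Winter--Zacharias structure theorem and the cone picture of the join---so there is nothing to contrast.
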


\begin{prp}\label{P_2601_Ann}
Let $G$ be a compact group, let $A$ be a \ca,
and let $\af$ be an action of $G$ on~$A$.
Let $X$ be a compact free $G$-space and let $n \in \Nz$.
If $\af$ has the $X$-Rokhlin property
and $I (G)^n K_G^0 (X) = 0$,
then $I (G)^n K^G_* (A) = 0$.
\end{prp}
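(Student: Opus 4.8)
The plan is to transport the vanishing statement from the model space $X$ to $A$ through the central sequence algebra, exploiting the multiplicative structure of equivariant $K$-theory over $R(G)$. Assume first that $A$ is unital, so that $F_\infty^{(\af)}(A) = A_{\infty}^{(\af)} \cap A'$. The $X$-Rokhlin property supplies a unital $G$-equivariant homomorphism $\psi \colon C(X) \to A_{\infty}^{(\af)} \cap A'$. Since the image of $\psi$ commutes with the image of the constant embedding $\iota \colon A \to A_{\infty}^{(\af)}$, and since $C(X)$ is nuclear, the formula $f \otimes a \mapsto \psi(f) \iota(a)$ defines a $G$-equivariant homomorphism $\Phi \colon C(X) \otimes A \to A_{\infty}^{(\af)}$. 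Writing $\iota_X \colon A \to C(X) \otimes A$ for the unital inclusion $a \mapsto 1 \otimes a$ (available because $X$ is compact), we have $\Phi \circ \iota_X = \iota$, hence $\Phi_* \circ (\iota_X)_* = \iota_*$ on equivariant $K$-theory, all maps being $R(G)$-module homomorphisms.

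The first key point is that $I(G)^n$ annihilates $K^G_*(C(X) \otimes A)$. Indeed, regarding $C(X) \otimes A$ as a $C(X)$-algebra via its first tensor factor makes $K^G_*(C(X) \otimes A)$ a module over the commutative ring $K_G^0(X) = K_0^G(C(X))$, with the trivial line $[1_X]$ acting as the identity. The unital inclusion $\C \to C(X)$ induces a ring homomorphism $R(G) \to K_G^0(X)$, $r \mapsto r \cdot [1_X]$, and by naturality of the external product the $R(G)$-module structure on $K^G_*(C(X) \otimes A)$ is the restriction of scalars along this map. Thus $I(G)^n$ acts on $K^G_*(C(X) \otimes A)$ through its image $I(G)^n \cdot [1_X] \subseteq I(G)^n K_G^0(X)$, which is zero by hypothesis. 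Consequently, for every $\xi \in K^G_*(A)$ and every $r \in I(G)^n$ we obtain $(\iota_X)_*(r \xi) = r \cdot (\iota_X)_* \xi = 0$, and therefore $\iota_*(r \xi) = \Phi_* (\iota_X)_* (r \xi) = 0$.

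It remains to show that $\iota_* \colon K^G_*(A) \to K^G_*(A_{\infty}^{(\af)})$ is injective; this is the technical heart of the argument. Consider the short exact sequence of $G$-$C^*$-algebras with continuous action
\[
0 \to c_0(A) \to l^{\infty, (\af)}(A) \xrightarrow{q} A_{\infty}^{(\af)} \to 0,
\]
where surjectivity of $q$ onto the continuous part uses \cite[Theorem 11]{Brown-continuity}. Let $\delta \colon A \to l^{\infty, (\af)}(A)$ be the constant embedding, so that $\iota = q \circ \delta$. If $\iota_* \xi = 0$, then $q_* (\delta_* \xi) = 0$, so by the equivariant six-term exact sequence $\delta_* \xi$ lies in the image of $K^G_*(c_0(A))$. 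Now equivariant $K$-theory commutes with countable $c_0$-direct sums, giving $K^G_*(c_0(A)) \cong \bigoplus_{k} K^G_*(A)$ with the $k$-th coordinate evaluation realizing projection onto the $k$-th summand. Each $\mathrm{ev}_k \colon l^{\infty, (\af)}(A) \to A$ is $G$-equivariant and satisfies $\mathrm{ev}_k \circ \delta = \id_A$. Writing $\delta_* \xi$ as the image of a finitely supported $\eta = (\eta_k)_k \in \bigoplus_k K^G_*(A)$ and applying $(\mathrm{ev}_k)_*$ gives $\xi = \eta_k$ for all $k$; since $\eta$ is finitely supported, $\xi = 0$. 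Hence $\iota_*$ is injective, and from $\iota_*(r\xi) = 0$ we conclude $r \xi = 0$, that is, $I(G)^n K^G_*(A) = 0$.

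The main obstacle is precisely this last step: because the $G$-action on $X$ is free there is no $G$-equivariant point evaluation $C(X) \to \C$, so one cannot directly split $(\iota_X)_*$, and the vanishing must instead be routed through $A_{\infty}^{(\af)}$ and then pulled back via the injectivity of $\iota_*$. The $c_0$-direct-sum computation of $K^G_*(c_0(A))$ and the care needed to remain within the continuously-acted subalgebras $l^{\infty,(\af)}(A)$ and $A_{\infty}^{(\af)}$ (so that equivariant $K$-theory and its six-term sequence are available) are the points requiring attention. The non-unital case is handled identically, once one observes that, since $\iota(A)$ annihilates $\Ann(A)$, the formula $f \otimes a \mapsto \psi(f)\iota(a)$ still yields a well-defined homomorphism $\Phi$, independent of the chosen lift of $\psi$ to $A_{\infty}^{(\af)} \cap A'$.
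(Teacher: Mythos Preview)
Your argument is correct but takes a genuinely different route from the paper's. The paper proceeds by hand at the level of projections: it first reduces to $K_0$ by tensoring $A$ with $C(S^1)$, then for an invariant projection $p$ and $\eta = [V_1]-[V_2] \in I(G)^n$ it uses $\eta\cdot[1_{C(X)}]=0$ to find a partial isometry $v\in (B(W)\otimes C(X))^G$ witnessing $[e_1\otimes 1]=[e_2\otimes 1]$, and then pushes $v$ into $A$ via an approximately multiplicative, approximately central equivariant c.p.\ map $\varphi\colon C(X)\to A$, checking directly that $(\id\otimes\varphi)(v)(1\otimes p)$ is close enough to a partial isometry between $e_1\otimes p$ and $e_2\otimes p$. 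Your approach replaces this finite-stage estimate by the factorization $\iota_* = \Phi_*\circ(\iota_X)_*$ together with the observation that the $R(G)$-action on $K^G_*(C(X)\otimes A)$ factors through $K_G^0(X)$, so that $I(G)^n$ already annihilates the middle term; the additional work is the injectivity of $\iota_*$, handled via the equivariant six-term sequence for $0\to c_0(A)\to l^{\infty,(\alpha)}(A)\to A_\infty^{(\alpha)}\to 0$ and the identification $K^G_*(c_0(A))\cong\bigoplus_k K^G_*(A)$. The paper's argument is more elementary (no $K$-theory of sequence algebras, no six-term sequence), while yours is more structural, treats $K_0$ and $K_1$ uniformly without the $C(S^1)$ trick, and would transport more readily to other $R(G)$-linear homology theories.
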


\begin{proof}
We first note that it is enough to show that the hypotheses imply $I (G)^n K^G_0(A) = 0$. To see this, notice that if $\af$ has the $X$-Rokhlin property, then the action $\af \otimes \id$ of $G$ on $A \otimes C(S^1)$ has the $X$-Rokhlin property as well, and $K^G_0 (A \otimes C(S^1) ) \cong K^G_0 (A) \oplus K^G_1 (A)$, so we can replace $A$ with $A \otimes C(S^1)$ for the purposes of the argument.

Let $H_0$ be a finite dimensional Hilbert space endowed with a unitary 
representation of $G$. Let $p \in (B(H_0) \otimes A)^G$ be a projection, and 
let $\eta \in I(G)^n$. We want to show that $\eta \cdot [p] = 0$. In order to 
simplify notation, we can replace $B(H_0) \otimes A$ with $A$ for the purpose 
of this argument, so we may assume without loss of generality that $p \in A$.

Write $\eta = [V_1] - [V_2]$, where $V_1$ and $V_2$ are finite dimensional unitary representations of $G$. Represent $[V_1]$ and $[V_2]$  by $G$-invariant projections $e_1,e_2$ in a finite dimensional representation space $W$. (For example, $W$ could be $V_1 \oplus V_2$ with the direct sum action.) 

Since $\eta \cdot [1_{C(X)}] = 0$ in $K^G_0(C(X))$, we have $[e_1 \otimes 1_{C(X)}] = [e_2 \otimes 1_{C(X)}]$ in $K^G_0(C(X))$. Therefore, possibly after adding trivial invariant projections, we can assume that $e_1 \otimes 1$ is Murray--von Neumann equivalent to $e_2 \otimes 1$ in $( B(W) \otimes C(X) )^G$. Pick $v \in ( B(W) \otimes C(X) )^G$ such that $v^*v = e_1 \otimes 1$ and $vv^* = e_2 \otimes 1$.

Choose a unital completely positive equivariant map $\varphi \colon C(X) \to A$ such that the following hold:
\begin{enumerate}
	\item $\| ( \id \otimes \varphi ) (v^*) \cdot ( \id \otimes \varphi ) (v) - ( \id \otimes \varphi ) ( e_1 \otimes 1) \| < 1/4$.
	\item $\| ( \id \otimes \varphi ) (v) \cdot ( \id \otimes \varphi ) (v^*) -  ( \id \otimes \varphi ) ( e_2 \otimes 1) \| < 1/4$.
	\item $\| \left [  ( \id_{B(W)} \otimes \varphi ) (v)   , 1_{B(W)} \otimes p  \right ] \| < 1/4$.
\end{enumerate}
Define 
\[
w =  ( \id_{B(W)} \otimes \varphi ) (v) \left ( 1_{B(W)} \otimes p \right ) 
\, .
\]
 Because $\varphi$ is equivariant, $w$ is fixed under the action of $G$. Also,
\begin{align*}
w^*w &=  \left ( 1_{B(W)} \otimes p \right ) ( \id_{B(W)} \otimes \varphi ) (v)^*   ( \id_{B(W)} \otimes \varphi ) (v) 
 \left ( 1_{B(W)} \otimes p \right ) 
 \\
 &\approx_{1/4} 
  \left ( 1_{B(W)} \otimes p \right )  ( \id_{B(W)}  \otimes \varphi )  ( e_1 \otimes 1_{C(X)})  \left ( 1_{B(W)} \otimes p \right ) \\
  & = e_1 \otimes p
\end{align*}
and
\begin{align*}
	ww^* &=  ( \id_{B(W)} \otimes \varphi ) (v)    \left ( 1_{B(W)} \otimes p \right )( \id_{B(W)} \otimes \varphi ) (v)^*
	\\
	&\approx_{1/4} 
 ( \id_{B(W)} \otimes \varphi ) (v)  ( \id_{B(W)} \otimes \varphi ) (v)^*  \left ( 1_{B(W)} \otimes p \right ) \\
	&\approx_{1/4}
	   ( \id_{B(W)}  \otimes \varphi ) ( e_2 \otimes 1_{C(X)})  \left ( 1_{B(W)} \otimes p \right ) 
	\\
	& = e_2 \otimes p \, .
\end{align*}
We thus have $\|w^*w - e_1 \otimes p\|<1/4$ and $\|ww^* - e_2 \otimes p\|<1/2$, so $[e_1 \otimes p] = [e_2 \otimes p]$, as required.
\end{proof}

The following proposition appears 
within the discussion on page 3 of \cite{AtSgl}.
\begin{prp}\label{prp:annihiliating-join}
	Let $G$ be a compact Lie group and let $d \in \N$. Then $I (G)^d K^*_G 
	(G^{\star d}) = 0$.
\end{prp}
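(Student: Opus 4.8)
The plan is to induct on $d$, exploiting the recursive structure of the join $G^{\star d} = G^{\star (d-1)} \star G$ together with the fact that the augmentation ideal is being raised to a power. Write $X = G^{\star (d-1)}$, so that $G^{\star d} = X \star G$ carries the diagonal left-translation action. First I would set up an equivariant Mayer--Vietoris sequence: cover $X \star G$ by the two closed sets $A = \{t \le 1/2\}$ and $B = \{t \ge 1/2\}$, which are $G$-invariant because $G$ acts trivially on the join coordinate $t$. Collapsing $t$ gives $G$-equivariant deformation retractions $A \simeq G$ (the $t=0$ end, where the $X$-coordinate is collapsed), $B \simeq X$ (the $t=1$ end, where the $G$-coordinate is collapsed), and $A \cap B = \{t = 1/2\} \cong X \times G$ with the diagonal action. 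This produces an exact sequence of $R(G)$-modules
\[
\cdots \to K^{n-1}_G(X \times G) \xrightarrow{\ \partial\ } K^n_G(X \star G) \xrightarrow{\ \rho\ } K^n_G(G) \oplus K^n_G(X) \to \cdots,
\]
in which every map, and in particular the boundary map $\partial$, is $R(G)$-linear.

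The engine of the argument, and the step I expect to need the most care, is the identification of the $R(G)$-module structure on the two ``free'' pieces $G$ and $X \times G$. Both are free $G$-spaces (the action on $G$ is free and transitive, and the diagonal action on $X \times G$ is free through the second factor), so $K^*_G(G) \cong K^*(\mathrm{pt})$ and $K^*_G(X \times G) \cong K^*(X)$. The key observation is that under these isomorphisms a class $[V] \in R(G)$ acts by tensoring with the associated bundle, and in both cases this bundle is the trivial bundle of rank $\dim V$: over a point it is just $V$, and for $X \times G$ the $G$-homeomorphism $(x,g) \mapsto g^{-1} x$ trivializes $(X \times G) \times_G V$ as $X \times V$. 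Hence the $R(G)$-module structure factors through the augmentation $R(G) \to \Z$, and therefore $I(G)$ annihilates both $K^*_G(G)$ and $K^*_G(X \times G)$. In particular this already settles the base case $d = 1$, where $G^{\star 1} = G$ and the claim is exactly $I(G) K^*_G(G) = 0$.

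With these facts in hand, the inductive step ($d \ge 2$) is a two-stage diagram chase in which I peel off one factor of $I(G)$ at each stage. Assume $I(G)^{d-1} K^*_G(X) = 0$, and let $\xi \in K^*_G(X \star G)$ and $\eta \in I(G)^{d-1}$. Applying the $R(G)$-linear map $\rho$ gives $\rho(\eta \xi) = \eta \cdot \rho(\xi)$, and this vanishes because $\eta$ annihilates $K^*_G(X)$ by the inductive hypothesis and annihilates $K^*_G(G)$ by the previous paragraph (using $d-1 \ge 1$, so $\eta \in I(G)$). By exactness, $\eta \xi = \partial(\mu)$ for some $\mu \in K^{*-1}_G(X \times G)$. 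Now for any $\zeta \in I(G)$, the $R(G)$-linearity of $\partial$ yields $\zeta \eta \xi = \partial(\zeta \mu) = 0$, since $\zeta$ annihilates $K^{*-1}_G(X \times G)$. As $\xi$, $\eta$, and $\zeta$ were arbitrary, this shows $I(G)^d K^*_G(G^{\star d}) = I(G) \cdot I(G)^{d-1} K^*_G(X \star G) = 0$, completing the induction.

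The only genuinely technical points are the module-structure computations of the second paragraph and the $R(G)$-linearity of the Mayer--Vietoris maps; once these are in place, the inductive collapse of $I(G)^d$ is forced by exactness, so I would concentrate the write-up on making the ``action factors through the augmentation on free pieces'' assertion precise and on confirming that the join decomposition is $G$-equivariant.
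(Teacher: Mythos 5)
Your proof is correct, and it takes a genuinely different route from the paper, which in fact offers no proof at all: it quotes the result from the discussion on page~3 of Atiyah--Segal \cite{AtSgl}. The argument there globalizes the same local fact by a different mechanism. Atiyah--Segal cover $G^{\star d}$ by the $d$ invariant open sets $U_i$ on which the $i$-th join coordinate is nonzero; each $U_i$ equivariantly deformation retracts onto a copy of $G$, so $I(G)$ restricts to zero in $K^0_G(U_i)$. Given $\eta_1,\ldots,\eta_d \in I(G)$, the pullback of each $\eta_i$ to $K^0_G(G^{\star d})$ therefore lifts to a relative class in $K^0_G(G^{\star d},U_i)$, and the product of these lifts lies in $K^0_G(G^{\star d},U_1\cup\cdots\cup U_d)=K^0_G(G^{\star d},G^{\star d})=0$; since the $R(G)$-action on $K^*_G(G^{\star d})$ factors through the ring map $R(G)\to K^0_G(G^{\star d})$, this annihilates $I(G)^d$ in one stroke, with no induction. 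So their proof uses the multiplicative structure on relative equivariant $K$-theory (products $K^*_G(X,A)\otimes K^*_G(X,B)\to K^*_G(X,A\cup B)$), whereas yours trades that for an induction on $d$ plus the $R(G)$-linearity of the closed Mayer--Vietoris boundary map, which is legitimate: connecting maps of compact pairs are module maps over the coefficient ring, and strong excision $K^*_G(X,A)\cong \widetilde{K}^*_G(X/A)$ makes the closed-cover sequence available. Both proofs hinge on the point you rightly single out as needing the most care: $I(G)$ kills $K^*_G(G)$ and $K^*_G(X\times G)$ because the relevant associated bundles are trivial. Be aware that this is special to these spaces and is not a consequence of freeness alone --- for instance $K^*_{S^1}(S^{2n+1})\cong R(S^1)/I(S^1)^{n+1}$ is far from being annihilated by $I(S^1)$ --- so your explicit trivialization $(x,g,v)\mapsto(g^{-1}x,g^{-1}v)$ is precisely the step that cannot be waved away, and you handle it correctly. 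In terms of what each approach buys: Atiyah--Segal's argument is shorter and gives the general covering principle (any $G$-space covered by $n$ invariant open sets over each of which $I(G)$ acts as zero has $K^*_G$ annihilated by $I(G)^n$), while yours is self-contained, avoids relative cup products, and uses only exactness and homotopy invariance, at the cost of the induction.
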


Combining Propositions \ref{P_2601_Ann} and \ref{prp:annihiliating-join}, we 
obtain the following corollary.
\begin{cor}\label{C_2601_Lower}
	Let $\alpha$ be an action of a compact Lie group $G$ on a unital separable $C^*$-algebra $A$. 
	If $I (G)^n K^G_* (A) \neq 0$,
	then $\RD (\af) \geq n$.
\end{cor}

We will make use of the following fact, which can be extracted from the main 
results of \cite{AtSgl}. We refer the reader to the discussion in Section 2 of 
\cite{AtSgl}. The construction in \cite{AtSgl} makes use of Milnor's 
construction of the total space of the classifying space of $G$ as an inductive 
limit of repeated joins; however, it is explained in the end of the section that 
one could have used any other construction. 

\begin{prp}\label{P_2601_EGv2}
	Let $G$ be a compact Lie group and let $d \in \N$. Let $X_1 \subseteq X_2 
	\subseteq X_3 \subseteq \cdots$ be a sequence of inclusions finite CW complexes 
	equipped with free $G$ actions, such that the inductive limit is a model 
	for the total space $EG$ of the classifying space (namely, all homotopy 
	groups vanish). Then there exists $N_0$ such that for all $k \geq N_0$ 
	we have $I (G)^d K^*_G (X_k) \neq 0$.
\end{prp}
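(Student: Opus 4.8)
The plan is to combine the Atiyah--Segal completion theorem with a purely algebraic non-nilpotence statement for the augmentation ideal, and then to harvest a single finite stage $X_k$ out of an inverse limit. Since each $X_k$ carries a free $G$-action we have $K^*_G(X_k) \cong K^*(X_k/G)$, and the quotients $X_k/G$ form a model for $BG$; the completion theorem of \cite{AtSgl} then identifies the inverse system $\{K^0_G(X_k)\}_k$ (transition maps induced by the inclusions $X_k \hookrightarrow X_{k+1}$) with $\{R(G)/I(G)^n\}_n$ up to pro-isomorphism, so that
\[
\varprojlim_k K^0_G(X_k) \;\cong\; R(G)^{\wedge}_{I(G)},
\]
the $I(G)$-adic completion, compatibly with the structure homomorphisms $R(G) = K^0_G(\mathrm{pt}) \to K^0_G(X_k)$ whose limit is the canonical completion map $c \colon R(G) \to R(G)^{\wedge}_{I(G)}$. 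Only the $K^0$ inverse limit is used, so the $\varprojlim^1$ term contributing to $K^1(BG)$ is irrelevant.

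The reduction then proceeds as follows. Suppose I can produce an element $\xi \in I(G)^d$ whose image $c(\xi)$ is nonzero in $R(G)^{\wedge}_{I(G)} = \varprojlim_k K^0_G(X_k)$. A nonzero element of an inverse limit has nonzero image in some term, so there is $N_0$ with $\xi_{N_0} \neq 0$, where $\xi_k \in K^0_G(X_k)$ denotes the image of $\xi$; by compatibility of the structure maps with the transition maps, $\xi_k$ restricts to $\xi_{N_0}$ for every $k \geq N_0$, whence $\xi_k \neq 0$ for all such $k$. Since the structure map is $R(G)$-linear, $\xi_k = \xi \cdot 1_{K^0_G(X_k)} \in I(G)^d K^0_G(X_k)$, and therefore $I(G)^d K^*_G(X_k) \neq 0$ for all $k \geq N_0$. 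Because $\ker c = \bigcap_n I(G)^n$, producing such a $\xi$ is equivalent to $I(G)^d \not\subseteq \bigcap_n I(G)^n$, which is in turn equivalent to $I(G)^d \neq I(G)^{d+1}$: if $I(G)^d = I(G)^{d+1}$ the filtration stabilizes, forcing $I(G)^d = \bigcap_n I(G)^n$, while conversely $\bigcap_n I(G)^n \subseteq I(G)^{d+1} \subseteq I(G)^d$ shows that $I(G)^d \subseteq \bigcap_n I(G)^n$ would collapse all three.

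Thus everything reduces to showing $I(G)^d \neq I(G)^{d+1}$ for a nontrivial compact Lie group $G$, and here I would argue by contradiction using Krull's intersection theorem, which applies because $R(G)$ is Noetherian (a theorem of Segal). If $I(G)^d = I(G)^{d+1}$, then $N := I(G)^d = \bigcap_n I(G)^n$ and there is $a \in I(G)$ with $(1+a)N = 0$. Now fix a faithful finite-dimensional representation $\rho$ of $G$ and set $x = [\rho] - (\dim \rho)\,[1] \in I(G)$, so that $x^d \in N$ and $(1+a)x^d = 0$. Viewing $R(G)$ inside the continuous class functions $C(G)$ via characters, $x(g) = \chi_\rho(g) - \dim \rho$ vanishes only at $g = e$ (by faithfulness, $\chi_\rho(g) = \dim\rho$ forces $\rho(g) = 1$), so the identity $(1 + a(g))\,x(g)^d = 0$ forces $a(g) = -1$ for all $g \neq e$; since $a(e) = 0$, this says $a = -1 + \delta_e$ in $C(G)$. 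For $G$ infinite this is impossible, since $\delta_e = a+1$ would be a continuous function on a group in which $\{e\}$ is not open; for $G$ finite it is also impossible, since $\delta_e = |G|^{-1}[\text{regular representation}]$ has non-integral coefficient on the trivial representation and so is not in $R(G)$. Either way we reach a contradiction.

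The genuinely substantive step is this last one: controlling the augmentation-ideal filtration uniformly over all compact Lie groups. The representation-ring-as-class-functions viewpoint, together with the single annihilator $a$ supplied by Krull's theorem and the choice of a faithful $\rho$, is precisely what makes the disconnected and positive-dimensional cases fall under one argument; the Atiyah--Segal identification and the extraction of a finite stage from the inverse limit are then essentially formal.
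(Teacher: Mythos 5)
Your proof is correct, and at its core it runs along the same route as the paper's: both arguments funnel everything through Atiyah--Segal, comparing the kernels $J_k = \ker\bigl(R(G) \to K^0_G(X_k)\bigr)$ with the powers of $I(G)$. Your inverse-limit/pro-isomorphism packaging extracts from \cite[Corollary~2.3]{AtSgl} the same content the paper uses (the $J_k$ decrease and are cofinal with the $I(G)^n$, so in particular $\bigcap_k J_k \subseteq \bigcap_n I(G)^n$), after which pulling out a single stage $N_0$ is formal. Where you genuinely go beyond the paper is the last step. The paper passes from cofinality to the conclusion with an ``In particular''; but cofinality alone only shows that if $I(G)^d \subseteq J_k$ for arbitrarily large $k$ then $I(G)^d \subseteq \bigcap_n I(G)^n$, so the deduction silently requires exactly the non-stabilization statement $I(G)^d \neq I(G)^{d+1}$ (equivalently $I(G)^d \not\subseteq \bigcap_n I(G)^n$) that you isolate and prove. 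Your proof of that fact is sound: $R(G)$ is Noetherian (Segal), so Krull's intersection theorem yields $a \in I(G)$ with $(1+a)\bigcap_n I(G)^n = 0$, and evaluating characters against $x = [\rho] - (\dim \rho)[1]$ for a faithful representation $\rho$ (so that $x(g) \neq 0$ for all $g \neq e$) forces $1+a = \delta_e$ as a class function, which is impossible by continuity when $G$ is infinite and by non-integrality of the multiplicity of the trivial representation when $G$ is finite and nontrivial. One small caveat, which applies equally to the paper's statement: the proposition tacitly assumes $G \neq \{e\}$, since for the trivial group $I(G) = 0$ and the conclusion fails; your argument correctly flags that nontriviality is what it needs. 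So: same Atiyah--Segal skeleton, but your write-up supplies a complete proof of the algebraic fact the paper's ``in particular'' quietly assumes --- a worthwhile addition.
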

\begin{proof}
	Let $J_k$ be the kernel of the map $K_G^*(\{\textrm{pt}\}) \cong R(G) \to 
	K_G^*(X_k)$. Applying  \cite[Corollary 2.3]{AtSgl} and the discussion above to the case in which 
	the space $X$ there is a point, we see that the sequence of ideals $(J_k)_{k=1,2,\ldots}$ 
	defines the $I(G)$-adic topology on $R(G)$. In particular, for any $d$ 
	there exists $N_0$ such that for all $k \geq N_0$, the ideal $I(G)^d$ is 
	not contained in $J_k$, which is what is claimed. 
\end{proof}

 Lemma 1.9 of \cite{HP15} is stated for finite group actions. We need a 
 generalization for actions of compact Lie groups. The idea of proof is 
 similar; the required modification in the proof involves using the free action 
 case of the Slice Theorem for compact Lie group actions (\cite[Section 
 3]{Mostow}) in place of the use of local liftings in for finite group actions 
 in the proof of \cite[Lemma 1.9]{HP15}.  We omit the details; we refer the 
 reader to \cite[Theorem 5.16]{GHTW} for a more general discussion.
\begin{lem}
	\label{lem:X-implies-dimrok}
	Let $A$ be a unital separable $C^*$-algebra, let $G$ be a compact Lie group, and let 
	$\alpha \colon G \to \Aut(A)$ be an action. Let $X$ be a compact free $G$-space such that the covering dimension of $X/G$ is $d$. If $\alpha$ has the $X$-Rokhlin property then $\RD(\alpha) \leq d$.
\end{lem}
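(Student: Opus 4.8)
The plan is to deduce the bound from the characterization of $\RD$ in \Lem{L_2601_XRP}: it suffices to show that $\af$ has the $G^{\star(d+1)}$-Rokhlin property. Since $\af$ is assumed to have the $X$-Rokhlin property, we are given a unital equivariant \hm{} $\psi \colon C(X) \to F_{\infty}^{(\alpha)}(A)$. The key observation is that a continuous $G$-equivariant map $f \colon X \to G^{\star(d+1)}$ yields, by pullback, a unital equivariant \hm{} $f^* \colon C(G^{\star(d+1)}) \to C(X)$, so that $\psi \circ f^*$ is a unital equivariant \hm{} witnessing the $G^{\star(d+1)}$-Rokhlin property. Thus the entire problem reduces to constructing such an $f$.

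To construct $f$, I would first invoke the free-action case of the Slice Theorem for compact Lie group actions (\cite[Section 3]{Mostow}): since $G$ acts freely on $X$ and $G$ is a compact Lie group, the quotient map $\pi \colon X \to X/G$ is a locally trivial principal $G$-bundle. Concretely, each point of $X/G$ has an open \nbhd{} $U$ over which there is an equivariant \hme{} $\pi^{-1}(U) \cong G \times U$, equivalently a continuous equivariant coordinate $\gamma \colon \pi^{-1}(U) \to G$ with $\gamma(h \cdot x) = h \gamma(x)$; the weights introduced below will be pulled back from $X/G$ and hence $G$-invariant.

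Next I would feed in the dimension hypothesis. Because $X/G$ is compact Hausdorff (hence normal) of covering dimension $d$, there is a finite open refinement of the trivializing cover above that decomposes as a union of $d+1$ families $\mathcal{V}_0, \ldots, \mathcal{V}_d$, each consisting of pairwise disjoint open sets; this is a standard characterization of covering dimension $\leq d$. Since the refinement still trivializes the bundle, disjointness within each colour $i$ lets me patch the local coordinates into a single continuous equivariant map $g_i$ on $\pi^{-1}\!\big(\bigcup \mathcal{V}_i\big)$, and a subordinate partition of unity on $X/G$ supplies $G$-invariant weights $s_i \colon X \to [0,1]$ with $\sum_{i=0}^{d} s_i = 1$ and $\supp s_i \subseteq \pi^{-1}\!\big(\bigcup \mathcal{V}_i\big)$. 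The formula
\[
f(x) = \big[\, s_0(x)\, g_0(x), \ \ldots, \ s_d(x)\, g_d(x) \,\big] \in G^{\star(d+1)}
\]
then defines the desired map: it uses exactly $d+1$ join coordinates, it is $G$-equivariant since the weights are invariant and each coordinate transforms by $g_i(h \cdot x) = h\, g_i(x)$ (matching the diagonal action on the join), and it is continuous because each group coordinate is forgotten precisely as its weight tends to $0$.

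The main obstacle is the third step: correctly converting "covering dimension $d$" into a trivializing refinement that splits into $d+1$ families of pairwise disjoint sets, and then verifying that the resulting $f$ is genuinely continuous and well defined at points where some weights vanish (so that the corresponding group coordinates are unambiguously discarded). This is exactly where the compact Lie group setting replaces the local-lifting argument of \cite[Lemma 1.9]{HP15} by the Slice Theorem; the remaining verifications are formal.
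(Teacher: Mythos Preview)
Your proposal is correct and matches the approach the paper sketches: the paper omits the details but says explicitly that the argument follows \cite[Lemma 1.9]{HP15} with the Slice Theorem \cite[Section 3]{Mostow} replacing the local-lifting step, and refers to \cite[Theorem 5.16]{GHTW} for a fuller treatment. Your write-up is exactly this---reduce via \Lem{L_2601_XRP} to producing an equivariant map $X\to G^{\star(d+1)}$, use the Slice Theorem for local triviality of $X\to X/G$, and then carry out the standard Milnor-type classifying-map construction using a $(d+1)$-colored refinement and a subordinate partition of unity.
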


\section{Constructions}\label{Sec_Exs}

In order to construct examples, we need the following simple fact concerning $K$-theory of joins of finite sets. This is a straightforward exercise in algebraic topology. Since we did not find a reference for it, we include a proof for the reader's convenience.

\begin{lem}
	Let $S$ be a finite set of cardinality $N$. Let $l,r \in \N \cup \{0\}$, 
	and let $X$ be a compact Hausdorff space such that $K^0(X) \cong \Z^l$ and 
	$K^1  (X) \cong \Z^r$. Then 
	\[
	K^0(X \star S) \cong \Z^{r (N-1)  +1} \qquad \mbox{and} \qquad 
	K^1(X \star S) \cong \Z^{ (l-1) (N-1) } \,
	.
	\]
\end{lem}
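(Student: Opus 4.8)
The plan is to compute $K^*(X \star S)$ via the reduced $K$-theory of the join together with a decomposition of $X \star S$ in terms of $X$ and the $N$-point set $S$. The key topological fact is that for well-behaved spaces the join is related to the smash product by $\widetilde{K}^*(X \star Y) \cong \widetilde{K}^{*-1}(X \wedge Y)$ (a suspension shift), so I would first reduce the problem to understanding $X \wedge S$. Since $S$ has $N$ points, we should think of $S$ with a chosen basepoint, so that the \emph{reduced} $K$-theory of $S_+$ (or the relevant based version) is carried by the $N-1$ non-basepoint points. Concretely, I expect $\widetilde{K}^0(S) \cong \Z^{N-1}$ and $\widetilde{K}^1(S) = 0$, since a finite set is $0$-dimensional.

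First I would set up the reduced groups: from $K^0(X) \cong \Z^l$ and $K^1(X) \cong \Z^r$ we get $\widetilde{K}^0(X) \cong \Z^{l-1}$ and $\widetilde{K}^1(X) \cong \Z^r$ (splitting off the class of a point via a basepoint of $X$). Then I would use the smash-product formula: because $S$ has trivial $\widetilde{K}^1$ and free $\widetilde{K}^0 \cong \Z^{N-1}$, the reduced $K$-theory of the smash $X \wedge S$ should be
\[
\widetilde{K}^i(X \wedge S) \cong \widetilde{K}^i(X) \otimes \Z^{N-1},
\]
with no $\mathrm{Tor}$ or $\mathrm{Ext}$ contributions since everything is free abelian. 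This gives $\widetilde{K}^0(X \wedge S) \cong \Z^{(l-1)(N-1)}$ and $\widetilde{K}^1(X \wedge S) \cong \Z^{r(N-1)}$.

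Next I would apply the suspension shift for the join. Using $\widetilde{K}^i(X \star S) \cong \widetilde{K}^{i+1}(X \wedge S)$ (or equivalently $\widetilde{K}^{i-1}$, up to the usual $\Z/2$ grading convention), we obtain
\[
\widetilde{K}^0(X \star S) \cong \widetilde{K}^1(X \wedge S) \cong \Z^{r(N-1)}
\quad\text{and}\quad
\widetilde{K}^1(X \star S) \cong \widetilde{K}^0(X \wedge S) \cong \Z^{(l-1)(N-1)}.
\]
Finally, restoring the unreduced groups by adding back the basepoint contribution $\Z$ in degree $0$ (the join $X \star S$ is connected), we arrive at $K^0(X \star S) \cong \Z^{r(N-1)+1}$ and $K^1(X \star S) \cong \Z^{(l-1)(N-1)}$, matching the claim.

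The main obstacle I anticipate is not any single deep theorem but rather bookkeeping: getting the degree shift from the join formula oriented correctly (so that $K^0$ of the join picks up $K^1$-information of $X$, giving the $r(N-1)$ term, while $K^1$ picks up the $l-1$ term), and ensuring that all the splittings into reduced groups and the identification $\widetilde{K}^0(S)\cong\Z^{N-1}$ are handled with consistent basepoint choices. A secondary point requiring care is justifying the smash-product Künneth step; since $S$ is a finite discrete space, one can avoid the full Künneth theorem and instead argue directly that $X \wedge S$ is a wedge of $N-1$ copies of $X$ (after collapsing the basepoint), reducing the computation to $\widetilde{K}^i$ of a wedge, which is just the direct sum $\bigoplus_{N-1}\widetilde{K}^i(X)$. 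I would favor this wedge-of-copies argument as it is elementary and sidesteps any subtlety about whether the spaces involved admit a clean Künneth formula.
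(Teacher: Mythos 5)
Your computation is correct and lands on the paper's answer, but by a genuinely different route. The paper runs Mayer--Vietoris directly on the join: it writes $X \star S = Y \cup Z$ with $Y \simeq X$, $Z \simeq S$, $Y \cap Z \simeq X \times S$, and then computes the kernel and cokernel of the difference maps $\Delta_0$, $\Delta_1$ by explicit integer matrix manipulations, splitting the resulting short exact sequences because all groups are free abelian. You instead use the structural chain $X \star S \rightsquigarrow \Sigma(X \wedge S)$, the homeomorphism $X \wedge S \cong \bigvee_{N-1} X$ (a wedge of $N-1$ copies of $X$), the suspension isomorphism, and additivity of reduced $K$-theory over finite wedges. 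Your route is more conceptual: the degree shift (the roles of $l$ and $r$ interchanging) is forced by the suspension isomorphism rather than emerging from bookkeeping, and the wedge observation is a clean way to bypass any K\"{u}nneth discussion (which the paper also avoids, since $K^*(X \times S) \cong K^*(X)^{\oplus N}$ trivially). The numerics agree: $\Z^{lN - l - N + 1} = \Z^{(l-1)(N-1)}$ and $\Z^{rN - r} \oplus \Z = \Z^{r(N-1)+1}$.

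The one step you should not wave off as "well-behaved spaces" is the join--smash identification itself, because the lemma assumes only that $X$ is compact Hausdorff. The homotopy equivalence $X \star S \simeq \Sigma(X \wedge S)$ is obtained by collapsing the subspace $A = (X \star \{s_0\}) \cup (\{x_0\} \star S)$, and both the contractibility of $A$ and the fact that collapsing it is a homotopy equivalence rely on nondegenerate basepoints (e.g.\ a CW structure); for a general compactum with a possibly bad basepoint this argument does not apply. The $K$-theoretic isomorphism is still true in the stated generality, but it needs a proof: $A$ is a union of two cones meeting in a segment, so a small Mayer--Vietoris computation gives $K^*(A) \cong K^*(\mathrm{pt})$ compatibly with the unital inclusion $\C \to C(A)$; then comparing the six-term sequences of the ideal $C_0\bigl((X \star S) \setminus A\bigr)$ inside $C(X \star S)$ and inside $C\bigl((X \star S)/A\bigr)$ and applying the five lemma shows that the collapse map $X \star S \to (X \star S)/A = \Sigma(X \wedge S)$ induces an isomorphism on $K$-theory. (The wedge and suspension steps, by contrast, are robust: $C_0$ of a finite wedge minus the wedge point is the direct sum of the $C_0(X_j \setminus x_0)$, and the reduced suspension corresponds to the $C^*$-algebraic suspension.) So in full generality you cannot entirely escape a Mayer--Vietoris argument, though yours would be on a much simpler configuration than the paper's; alternatively, since the lemma is only applied to iterated joins of finite sets, which are finite complexes, your cited fact suffices there, but then your proof establishes slightly less than the statement as given. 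Finally, note the implicit standing assumption $X \neq \varnothing$, which guarantees $l \geq 1$ so that $\Z^{l-1}$ makes sense; this is shared with the paper's proof.
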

\begin{proof}
	Set 
	\[
	Y =  ( X \times S \times [0,2/3] ) / {\sim} \qquad \mbox{and} \qquad  Z = ( X \times S \times [1/3,1] ) / { \sim }
	\, .
	\]
	 Then $Y$ is homotopy equivalent to $X$, $Z$ is homotopy equivalent to $S$, and $Y \cap Z$ is homotopy equivalent to $X \times S$. Consider the Mayer-Vietoris sequence
	\[
 \xymatrix{
 	K^0(X \star S) \ar[r]^-{\textrm{res}_0} & K^0(Y) \oplus K^0(Z) \ar[r]^-{\Delta_0} & K^0(Y \cap Z) \ar[d] \\
 	 K^1(Y \cap Z) \ar[u] & \ar[l]^-{\Delta_1}  K^1(Y) \oplus K^1(Z)  & \ar[l]^-{\textrm{res}_1}  K^1(X \star S) \, .
	} 
	\]
	Filling in the groups which are given in the hypothesis, we obtain
		\[
	\xymatrix{
		K^0(X \star S) \ar[r]^-{\textrm{res}_0} & \Z^l \oplus \Z^N \ar[r]^-{\Delta_0} & \Z^l \otimes \Z^N \ar[d] \\
		\Z^r \otimes \Z^N \ar[u] & \ar[l]^-{\Delta_1}  \Z^r \oplus 0 & \ar[l]^-{\textrm{res}_1}  K^1(X \star S) \, .
	}
	\]
	Identifying $\Z^l \cong M_{l,1}(\Z)$ (integer column vectors), $\Z^N \cong 
	M_{1,N}(\Z)$ (integer row vectors), and  $\Z^l \otimes \Z^N \cong 
	M_{l,N}(\Z)$, we can write $\Delta_0$ in the first row as 
	\[
	\Delta_0(a,b) = a \cdot ( \begin{matrix} 1 & 1 & \cdots & 1 \end{matrix} ) - \left ( \begin{matrix} 1 \\ 1 \\ \vdots \\ 1 \end{matrix} \right ) \cdot b 
	\, .
	\]
	One readily verifies that the image of $\Delta_0$ is generated by elements which are not divisible by any integer greater than $1$, and its kernel is generated by the single element $(a_0,b_0)$ with $a_0 =  \left ( \begin{matrix} 1 \\ 1 \\ \vdots \\ 1 \end{matrix} \right )$ and $b_0 = ( \begin{matrix} 1 & 1 & \cdots & 1 \end{matrix} ) $. 
	Thus $\ker(\Delta_0) \cong \Z$ and $\mathrm{coker}(\Delta_0) \cong 
	\Z^{l\cdot N - l - N +1}$. 
	Furthermore, the map $\Delta_1$ is injective 
	and its image is generated by elements which are not divisible by any 
	integer greater than $1$. Since all the groups involved are free abelian, 
	and thus the resulting short exact sequences split, we can put this all 
	together and obtain
	\[
	K^1(X \star S) \cong \mathrm{coker}(\Delta_0) \cong Z^{l\cdot N - l - N +1}
	\]
	and 
	\[
	K^0(X \star S) \cong \mathrm{coker}(\Delta_1) \oplus \ker(\Delta_0) \cong Z^{r\cdot N - r} \oplus \Z 
	\, ,
	\]
	as required.
\end{proof}

\begin{cor}
	Let $N \in \{ 2,3,\ldots\}$, and let $S$ be a finite set of cardinality $N$. Then for any $k \in \N$, we have
	\[
	K^0(S^{\star k}) \cong \left \{  \begin{matrix} 
			\Z^{(N-1)^k + 1} & \colon & k \; \mathrm{ odd } \\
			\Z & \colon & k \; \mathrm{ even } \\
		\end{matrix}   \right . 
	\]
	and
	\[
	K^1(S^{\star k}) \cong \left \{  \begin{matrix} 
		0 & \colon & k \;  \mathrm{ odd } \\
		\Z^{ (N-1)^k } & \colon & k \;  \mathrm{ even }  \, . \\
	\end{matrix}   \right . 
	\]
\end{cor}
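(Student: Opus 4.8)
The plan is to prove both formulas simultaneously by induction on $k$, feeding the preceding lemma into itself. The key structural input is that the join is associative up to homeomorphism, so that $S^{\star (k+1)} \cong (S^{\star k}) \star S$; this lets me apply the lemma with the compact Hausdorff space $X = S^{\star k}$ at each stage. Since every group appearing in the claimed formulas is free abelian (reading $0$ as $\Z^0$), the hypotheses of the lemma, namely that $K^0(X) \cong \Z^l$ and $K^1(X) \cong \Z^r$ for some $l,r \in \Nz$, are automatically satisfied at each step, so the induction never leaves the regime in which the lemma applies.

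For the base case $k = 1$ I would observe that $S^{\star 1} = S$ is a disjoint union of $N$ points, so that $K^0(S) \cong \Z^N$ and $K^1(S) = 0$. As $k = 1$ is odd, this matches the asserted values $\Z^{(N-1)^1 + 1} = \Z^N$ and $0$.

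For the inductive step I would split on the parity of $k$ and read off $l$ and $r$ from the induction hypothesis. If $k$ is odd, then $l = (N-1)^k + 1$ and $r = 0$, so the lemma yields $K^0(S^{\star(k+1)}) \cong \Z^{r(N-1)+1} = \Z$ and $K^1(S^{\star(k+1)}) \cong \Z^{(l-1)(N-1)} = \Z^{(N-1)^{k+1}}$, which are exactly the values prescribed for the even index $k+1$. If $k$ is even, then $l = 1$ and $r = (N-1)^k$, so the lemma yields $K^0(S^{\star(k+1)}) \cong \Z^{r(N-1)+1} = \Z^{(N-1)^{k+1}+1}$ and $K^1(S^{\star(k+1)}) \cong \Z^{(l-1)(N-1)} = \Z^0 = 0$, which are the values prescribed for the odd index $k+1$. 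This closes the induction.

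I do not expect a genuine obstacle here: all the geometric content, namely the Mayer--Vietoris computation, is already packaged in the preceding lemma, and what remains is purely arithmetic bookkeeping of the exponents. The only points requiring any care are checking that the two parity cases interchange correctly (odd index producing an even index and vice versa) and confirming that the intermediate groups stay free abelian of the stated ranks so that the lemma remains applicable throughout.
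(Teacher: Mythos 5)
Your proof is correct and is essentially the argument the paper intends: the corollary is stated as an immediate consequence of the preceding lemma, obtained exactly by the induction you describe (base case $S^{\star 1}=S$ with $K^0\cong\Z^N$, $K^1=0$, then alternating the two parity cases via $S^{\star(k+1)}\cong(S^{\star k})\star S$). Your added care about the groups staying free abelian, so the lemma's hypotheses persist, is exactly the right bookkeeping point.
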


\begin{thm}\label{thm:high-rd}
	Let $G$ be a finite group. For any $n \in \N$ there exists a simple unital AF algebra with unique trace along with an action $\alpha \colon G \to \Aut(A)$ such that $n <  \RD(\alpha) < \infty$. 
\end{thm}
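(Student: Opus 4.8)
The plan is to sandwich $\RD(\alpha)$ strictly between $n$ and $\infty$ using the two complementary tools established above: Corollary \ref{C_2601_Lower}, which forces $\RD(\alpha) > n$ once a suitable power of the augmentation ideal acts nontrivially on equivariant $K$-theory, and Lemma \ref{lem:X-implies-dimrok}, which bounds $\RD(\alpha)$ from above by $\dim(X/G)$ as soon as $\alpha$ has the $X$-Rokhlin property for a finite free $G$-space $X$. First I would apply Proposition \ref{P_2601_EGv2} to the Milnor filtration $G^{\star 1} \subseteq G^{\star 2} \subseteq \cdots$ of $EG$, whose non-equivariant $K$-theory is computed in the corollary above, to fix an integer $k$ and the finite free $G$-CW complex $Y = G^{\star k}$ with $I(G)^{n+1} K^*_G(Y) \neq 0$. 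Since $Y$ is a finite complex, $\dim(Y/G) < \infty$.

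The heart of the argument is then to build a single simple unital AF algebra $A$ with unique trace, together with an action $\alpha$ of $G$, so that both of the following hold at once: $I(G)^{n+1} K^G_*(A) \neq 0$, and $\alpha$ has the $X$-Rokhlin property for some finite free $G$-complex $X$ with $\dim(X/G) < \infty$. I would realize $A$ as an inductive limit $A = \varinjlim A_m$ of finite dimensional $G$-algebras with equivariant connecting maps, designed so that: (i) the equivariant dimension group $K^G_0(A) = \varinjlim K^G_0(A_m)$ carries $I(G)$-power torsion modeled on $K^*_G(Y)$, in particular detecting a nonzero class in $I(G)^{n+1} K^G_*(A)$; and (ii) the connecting maps distribute the relevant representations along Rokhlin towers, so that finite dimensional approximations to a unital equivariant map $C(X) \to A$ appear in the relative commutants, yielding the $X$-Rokhlin property in $F_\infty^{(\alpha)}(A)$. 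Simplicity and uniqueness of the trace I would arrange by the standard conditions on the Bratteli diagram, namely eventual positivity of all multiplicities and matrix sizes growing fast enough to pin down a single normalized trace. Granting this, the lower bound $\RD(\alpha) \geq n+1 > n$ is immediate from Corollary \ref{C_2601_Lower}, and the upper bound $\RD(\alpha) \leq \dim(X/G) < \infty$ from Lemma \ref{lem:X-implies-dimrok}.

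The main obstacle is precisely this construction, because its two requirements pull against one another. Proposition \ref{P_2601_Ann} shows that the $X$-Rokhlin property transfers the vanishing of $I(G)^m$ on $K^*_G(X)$ to the vanishing of $I(G)^m$ on $K^G_*(A)$; so any Rokhlin structure actively destroys low powers of the augmentation ideal on $K^G_*(A)$, which is exactly what must survive to feed Corollary \ref{C_2601_Lower}. I would reconcile the two by choosing the Rokhlin space to be a sufficiently high join $X = G^{\star (d+1)}$, so that by Proposition \ref{prp:annihiliating-join} its annihilator exponent $d+1$ strictly exceeds $n+1$; then the vanishing forced by Proposition \ref{P_2601_Ann} concerns only $I(G)^{d+1}$ and leaves $I(G)^{n+1} K^G_*(A)$ unconstrained, while Lemma \ref{L_2601_XRP} keeps $\RD(\alpha)$ finite. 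Concretely I would take $d+1 > k$ and engineer the inductive limit so that the $I(G)^{n+1}$-class inherited from $Y = G^{\star k}$ persists to the limit while the larger join governs the towers. The points I expect to demand the most care are verifying that the chosen class in $I(G)^{n+1}$ is genuinely nonzero in $\varinjlim K^G_*(A_m)$ rather than being killed by some connecting map, and checking that the relative-commutant data really assemble into the $X$-Rokhlin property.
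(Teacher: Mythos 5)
Your skeleton is the right one, and it matches the paper's: Proposition~\ref{P_2601_EGv2} to choose the join, Corollary~\ref{C_2601_Lower} for the lower bound, Lemma~\ref{L_2601_XRP} (or Lemma~\ref{lem:X-implies-dimrok}) for the upper bound, and a correct diagnosis of the tension coming from Proposition~\ref{P_2601_Ann}. (Your bookkeeping with $I(G)^{n+1}$ even gives the strict inequality cleanly; and the second, larger join is unnecessary --- the paper uses the single space $X = G^{\star k}$ for both roles, since $I(G)^{n+1}K^*_G(G^{\star k}) \neq 0$ already forces $k > n+1$, so towers modeled on $X$ itself only annihilate $I(G)^k$.) The fatal problem is the heart of your construction: realizing $A$ as an equivariant inductive limit of \emph{finite dimensional} $G$-algebras, with simplicity and the trace controlled by a Bratteli diagram. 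This is not just hard to arrange; it is impossible, and for a reason that is exactly the point of the paper's construction. For a finite dimensional $G$-algebra $B$, the module $K^G_0(B)$ is a finite direct sum of (possibly twisted) modules $R(H)$, with $R(G)$ acting via restriction, and $K^G_1(B) = 0$. In every such module, any element killed by \emph{some} power of $I(G)$ is already killed by $I(G)$: taking $y = [\mathrm{reg}_G] - |G| \in I(G)$, whose character equals $-|G|$ off the identity, the relation $I(G)^k x = 0$ forces the character of $x$ to vanish off the identity, and then $zx = 0$ for every $z \in I(G)$. (For $G = \Z_2$ this is just $I(\Z_2)^k = 2^{k-1}I(\Z_2)$ together with torsion-freeness of $R(\Z_2)$.) Since $R(G)$ is Noetherian, this property passes to equivariant inductive limits: in $K^G_0(A) = \varinjlim K^G_0(A_m)$, the $I(G)$-power torsion coincides with the $I(G)$-torsion.

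Now combine this with your own requirement (ii). If $\RD(\alpha) \leq d < \infty$, then by Lemma~\ref{L_2601_XRP}, Proposition~\ref{prp:annihiliating-join} and Proposition~\ref{P_2601_Ann} we get $I(G)^{d+1} K^G_*(A) = 0$, i.e.\ \emph{every} class is $I(G)$-power torsion; by the previous paragraph, $I(G) K^G_*(A) = 0$. Hence for any equivariant limit of finite dimensional $G$-algebras with finite Rokhlin dimension with commuting towers, Corollary~\ref{C_2601_Lower} can never certify $\RD(\alpha) \geq 1$, let alone $\RD(\alpha) > n$: your requirements (i) and (ii) are mutually exclusive for AF building blocks, not merely in tension. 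The missing idea is that the building blocks must retain the space: the paper uses the homogeneous algebras $A_m = M_{N+1}^{\otimes m} \otimes C(X)$ with $X = G^{\star k}$, so a copy of the module $K^*_G(X)$ --- in which power torsion genuinely differs from torsion --- is present at every finite stage. The connecting maps are ``identity $\oplus$ point evaluations along dense orbits'', conjugated by the trivial-plus-regular representation; the evaluation summands contribute only regular-representation classes, which are killed by $I(G)$, so the chosen class $\eta$ survives to the limit and feeds Corollary~\ref{C_2601_Lower}, while the central equivariant copies of $C(X)$ give the finite upper bound. Crucially, AF-ness is then recovered only \emph{a posteriori} and non-equivariantly: one verifies that the limit is a simple unital AH algebra with no dimension growth, unique trace, torsion-free $K_0$ with Riesz interpolation and $K_1 = 0$, and invokes the classification theorem of \cite{EGL}. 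The $C^*$-algebra is AF, but the action is never an inductive limit of actions on finite dimensional algebras --- and by the argument above, it cannot be.
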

\begin{proof}
	Fix $n \in \N$. Using Proposition \ref{P_2601_EGv2}, and recalling that one can take $X_k = G^{\star k}$ there, fix an odd number $k > 
	n$ such that $I(G)^n K_G^*(G^{\star k}) \neq 0$. Define $X = G^{\star k}$.  
	Pick a dense sequence $x_1,x_2,\ldots \in X$. Let $N$ be the cardinality of 
	$G$. For any $m > 0$, define $\varphi_m \colon C(X) \to M_{N+1} \otimes 
	C(X)$ as follows. Index the rows and columns of $M_{N+1}$ by $\{0\} \cup 
	G$. For $g,h \in \{0\} \cup G$, let $e_{g,h}$ be the corresponding standard matrix unit in $M_{N+1}$. Set 
	\[
	\varphi_m (f) = e_{0,0} \otimes f \oplus \bigoplus_{g \in G} \left [ e_{g,g} \otimes f(g \cdot x_m) \cdot 1_{C(X)}  \right ] \; .
	\] 
	For $g \in G$, we define a representation $g \mapsto u_g \in M_{N+1}$ to be the 
	direct sum of the trivial representation and the left regular 
	representation, given as follows: If we denote by $( \delta_{x} )_{x \in \{0\} \cup 
	G}$ the orthonormal basis of $\C^{N+1}$ corresponding to the indexing of 
	$M_{N+1}$  above, then $u_g \delta_0 = \delta_0$ and, for $h \in G$, we 
	have $u_g \delta_h = \delta_{gh}$. 
	
	We set $A_0 = C(X)$, and inductively define $A_{m+1} = M_{N+1} \otimes A_m$. For $m \in \Nz$, define $\psi_{m+1,m} \colon A_{m} \to A_{m+1}$ by $\psi_{m+1,m} = \id_{M_{N+1}^{\otimes m}} \otimes \varphi_m$. 
	Define $A = \underset{\longrightarrow}{\lim} (A_m , \psi_{m+1,m})$. Set 
	$\psi_{\infty,m} \colon A_m \to A$ to be the canonical map of $A_m$ into 
	the inductive limit. 
	
	For $g \in G$ and $m \in \Nz$, we define $\alpha^{(m)}_g \in \Aut(A_m)$ by 
	\[
	\alpha^{(m)}_g(f) (x) = \Ad (u_g^{\otimes m}) \otimes f (g^{-1} \cdot x)  
	\; .
	\]
	 For all $m \in \Nz$ we have $\alpha^{(m+1)}_g \circ \psi_{m+1,m} =\psi_{m+1,m} \circ \alpha^{(m)}_g$. Thus, the automorphisms $ \alpha^{(m)}_g $ define an inductive limit action $g \mapsto \alpha_g \in  \Aut(A)$. 
	 
	 We claim that $A$ is a simple AF algebra with unique trace and that $k \geq \RD(\alpha) > n$. 
	 
	 The construction of $A$ follows the method discussed in \cite{Goodearl}. 
	 That $A$ is simple is a standard argument which follows from the 
	 assumption that the sequence $x_1,x_2,\ldots$ is dense and that $X$ is 
	 connected: if $a \in A_m \smallsetminus \{0\}$ for some $m \in \Nz$ then 
	 there exists $l \geq m$ such that the image of $a$ in $A_l$ is full (that 
	 is, does not vanish at any point); see \cite[Lemma 1]{Goodearl}. It 
	 follows from \cite[Theorem 13]{Goodearl} and the following discussion that 
	 $K_0(A)$ satisfies the Riesz interpolation property. Furthermore, since 
	 $K_0(A_m)$ is torsion free for all $m$, so is $K_0(A)$. It is also 
	 immediate that $K_0(A) \not \cong \Z$. Because $K_1(A_m) = 0$ for all $m$, 
	 we have $K_1(A) = 0$. We now claim that $A$ has a unique trace. Though 
	 this follows from known techniques in the literature, we did not find a 
	 convenient reference; we thus provide a full argument for the 
	 convenience of the reader. 
	 We denote by $\tr$ the unique (normalized) tracial state on $M_k$, for any $k$; we omit the subscript $k$ as there is no risk of confusion.	 
	 Let $\tau$ and $\tau'$ be two tracial states on $A$. It suffices to show that $\tau(f) = \tau'(f)$ for any $f$ from one of the steps of the inductive limit. Set $\tau_m = \tau|_{A_m}$ and   $\tau'_m = \tau'|_{A_m}$. These sequences of traces satisfy 
	 \[
	 \tau_{m+1} \circ \psi_{m+1,m} = \tau_m \quad \mbox{and} \quad \tau'_{m+1} \circ \psi_{m+1,m} = \tau'_m \quad \mbox{for all } m \in \N \, .
	 \]
	    For $l>m$,  we set
	 \[
	 \psi_{l,m} = \psi_{l-1,l-2} \circ \psi_{l-2,l-3} \circ \cdots \circ \psi_{m+1,m} \colon A_m \to A_l
	 \, ,
	 \]
	  with $\psi_{m,m} = \id_{A_m}$.
	 
	 Fix $m_0 \in \N$ and $f \in A_{m_0}$. 
	 Then 
	 \[
	 ( \tau_{m+1} \circ \psi_{m+1,m} ) (f) = \frac{1}{N+1} \left ( \tau_m(f) + \sum_{g \in G} \tr(f(g \cdot x_m ) )   \right ) 
	 \]
	 and
	 \[
	 ( \tau'_{m+1} \circ \psi_{m+1,m} ) (f) = \frac{1}{N+1} \left ( \tau'_m(f) + \sum_{g \in G} \tr(f(g \cdot x_m ) )   \right )  \, .
	 \]
	  Therefore
	 \[
	  \| \tau_{m+1} \circ \psi_{m+1,m} - \tau'_{m+1} \circ \psi_{m+1,m} \| 
	  \leq \frac{1}{N+1} \| \tau_{m} - \tau'_{m} \| \, .
	 \]
	 By repeating the argument, we see that for any $j \in \N$, we have
	 \[
	 \| \tau_{m+j} \circ \psi_{m+j,m} - \tau'_{m+j} \circ \psi_{m+j,m} \| 
	 \leq \frac{1}{ ( N+1 )^j } \| \tau_{m} - \tau'_{m} \| \, .
	 \]
	 Consequently, $\tau = \tau'$. 
	 
	 So far, we have shown that $A$ is a simple AH algebra with no dimension 
	 growth, and that it has the same Elliott invariant as a simple AF algebra. 
	 By the classification theorem for $AH$ algebras with no dimension growth 
	 (\cite{EGL}), it follows that $A$ is in fact a simple AF algebra.
	 
	 Now define $\sigma_m \colon C(X) \to A_m$ by $\sigma_m(f) = 
	 1_{M_{N+1}^{\otimes m}} \otimes f$. Each such map is 
	 $G$-equivariant, and its image is in the center of $A_m$. 
	 Therefore $ ( \psi_{\infty,m} \circ \sigma_m )_{m=1,2,\ldots}$ is a 
	 central sequence of $G$-equivariant unital homomorphisms from $C(X)$ to 
	 $A$. It follows that $\RD(\alpha) \leq k$.
	 
	 In order to show that $\RD(\alpha) \geq n$, by Corollary 
	 \ref{C_2601_Lower} it suffices to show that 
	 $I(G)^nK^G_*(A) \neq 0$.
	  Pick an element $\eta \in K^G_*(X)$ such that 
	 $I(G)^n \eta \neq 0$. It suffices to show that for any $m \in \N$ we have 
	 $I(G)^n ( \psi_{m,1})_* (\eta) \neq 0$. 
	 If $\eta \in K_G^1(X)$ then this 
	 is immediate, because $( \psi_{m,1})_* $ is an isomorphism. If $\eta \in 
	 K_G^0(X)$ then $( \psi_{m,1})_*  (\eta)$ is the direct sum of $\eta$ with an 
	 integer multiple of the class of the left regular representation 
	 (depending on $m$ and the rank of $\eta$). The copies of the class of the 
	 left regular representation are annihilated by the augmentation ideal, and 
	 therefore we have $I(G)^n ( \psi_{m,1})_*  (\eta) \neq 0$, as required.  
\end{proof}

In the specific case in which $G = \Z_2$, we can obtain a somewhat improved result.

\begin{prp}\label{I_2601_Z2}
There exist a simple unital AF algebra $A$ with unique trace, and 
actions $\alpha_m \colon \Z_2 \to \Aut(A)$ for $m \in \N$, such that  for all $m$, we have $m \leq
\RD(\alpha_m) \leq 2m+2$.
\end{prp}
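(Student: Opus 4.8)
The plan is to specialize the construction in the proof of \Thm{thm:high-rd} to $G = \Z_2$, and then to observe that varying the dimension of the base space always lands on a single AF algebra. Throughout, $N = 2$, so $M_{N+1} = M_3$, and I use the identification $\Z_2^{\star k} = S^{k-1}$ with the antipodal free action, so that $\Z_2^{\star k}/\Z_2 = \mathbb{RP}^{k-1}$. For each $m \in \N$ I would run the construction of \Thm{thm:high-rd} with the odd integer $k = 2m+3$ and base space $X = S^{2m+2}$, producing a simple unital AF algebra $A^{(m)}$ with unique trace and an action $\alpha_m$ of $\Z_2$.

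The key step, and the one place where something genuinely new beyond \Thm{thm:high-rd} is needed, is to check that $A^{(m)}$ is independent of $m$ up to isomorphism, so that all the $\alpha_m$ can be transported onto one fixed algebra $A$. For this I would compute the ordered $K_0$-group. Since $K^0(S^{2m+2}) = \Z \cdot 1 \oplus \Z \cdot \beta$ with $\beta$ of rank $0$, and the connecting map $\varphi_m$ consists of one corner embedding together with two point evaluations, the induced map on $K_0(A^{(m)}_j) = K^0(S^{2m+2})$ is $(a,b) \mapsto (3a, b)$, independently of $m$ and of the chosen points. Hence $K_0(A^{(m)}) \cong \Z[1/3] \oplus \Z$, with order determined by the first coordinate, the second summand infinitesimal, and class of the unit $(1,0)$; together with $K_1 = 0$ and the unique trace this Elliott invariant is the same for every $m$. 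By the classification of simple unital AF algebras, all the $A^{(m)}$ are isomorphic to a single algebra $A$, and conjugating by these isomorphisms puts every $\alpha_m$ on $A$ without changing its Rokhlin dimension.

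For the upper bound I would invoke \Lem{lem:X-implies-dimrok}: the action $\alpha_m$ has the $X$-Rokhlin property for $X = S^{2m+2}$ (via the central sequence of equivariant unital homomorphisms $C(X) \to A$ coming from the maps $\sigma_j$ in \Thm{thm:high-rd}), and $\dim(X/\Z_2) = \dim(\mathbb{RP}^{2m+2}) = 2m+2$, so $\RD(\alpha_m) \leq 2m+2$. For the lower bound I would compute the relevant equivariant $K$-theory of the base: $K_{\Z_2}^0(S^{2m+2}) = K^0(\mathbb{RP}^{2m+2})$, and $\widetilde{K}^0(\mathbb{RP}^{2m+2}) \cong \Z/2^{m+1}$, on which the augmentation ideal acts through $t - 1 \mapsto$ (a generator), with the ring relation $(t-1)^n = (-2)^{n-1}(t-1)$; hence $I(\Z_2)^{m+1} K^*_{\Z_2}(S^{2m+2}) \neq 0$. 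As in the final paragraph of the proof of \Thm{thm:high-rd}, the connecting maps only add multiples of the regular representation class, which is annihilated by $I(\Z_2)$, so this nonvanishing survives in the limit and $I(\Z_2)^{m+1} K^{\Z_2}_*(A) \neq 0$. By Corollary~\ref{C_2601_Lower} this gives $\RD(\alpha_m) \geq m+1 \geq m$, completing the bounds $m \leq \RD(\alpha_m) \leq 2m+2$.

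I expect the main obstacle to be the isomorphism-invariance computation in the second paragraph: once one knows that the base spaces $S^{2m+2}$ all yield the same AF algebra, the upper and lower bounds follow from the tools already assembled (the dimension estimate of \Lem{lem:X-implies-dimrok} and the annihilation estimate of Corollary~\ref{C_2601_Lower}) together with the standard computation of $K^*(\mathbb{RP}^{2m+2})$. A secondary point requiring care is verifying that the image of $I(\Z_2)$ in $\widetilde{K}^0(\mathbb{RP}^{2m+2})$ really has order $2^{m+1}$ with the stated generator, which is where the precise identification $\Z_2^{\star k} = S^{k-1}$ and the relation $(t-1)^2 = -2(t-1)$ in $R(\Z_2)$ enter.
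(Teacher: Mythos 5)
Your proposal is correct and takes essentially the same route as the paper's proof: specialize the construction of Theorem~\ref{thm:high-rd} to $G = \Z_2$ with $k = 2m+3$, identify $\Z_2^{\star k} \cong S^{k-1}$, deduce from classification that all choices of $m$ land on a single simple unital AF algebra with unique trace, and obtain the bounds from $\dim(\R P^{2m+2}) = 2m+2$ together with Atiyah's computation of $K^0(\R P^{2m+2})$ fed into Corollary~\ref{C_2601_Lower}. Your explicit ordered-$K_0$ computation ($\Z[1/3] \oplus \Z$ with infinitesimal second summand) fills in a detail the paper merely asserts, and your module identification $K^0_{\Z_2}(S^{2m+2}) \cong R(\Z_2)/I(\Z_2)^{m+2}$ carries the correct exponent (the paper's displayed formula is off by one, harmlessly so), which is why you get the marginally sharper lower bound $\RD(\alpha_m) \geq m+1$.
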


\begin{proof}
	We repeat the construction in Theorem \ref{thm:high-rd}. Recall that 
	$\Z_2^{\star k} \cong S^{k-1}$. This fact is well known; we did not find a good reference, but it is easy to prove. Thus, regardless of which odd $k$ we 
	choose, the resulting AF-algebra has the same $K$-theory, independently 
	of the choice of $k$. Therefore, we obtain actions on the same AF algebra. 
	As in 
	the proof of Theorem \ref{thm:high-rd}, if we pick an odd number $k > m$ 
	such that $I(\Z_2)^m K_{\Z_2}( \Z_2^{\star k} ) \neq 0$ then we have $k 
	> \RD(\alpha) \geq m$. It remains to show that we can choose $k \leq 2m+3$. 

	Using the computations on pages 103--107 of~\cite{Atyh}, we see that for any $l$, we have
	\[
	K_{\Z_2}^0 ( \Z_2^{\star (2 l + 1)} ) \cong K^0 (\R P^{2 l})
	\cong R (\Z_2) / I (\Z_2)^l.
	\]
Thus, for $k=2m+3$ we have 
\[
I(\Z_2)^m K_{\Z_2}^0(\Z_2^{\star k} ) \cong I (\Z_2)^m / I (\Z_2)^{m+1} \neq 0
\, ,
\]
as required.	
\end{proof}

We now turn to the case of non-discrete compact Lie groups. For the case of 
actions of the circle group, we obtain a sharp result.

\begin{prp}\label{I_2601_S1}
For every $d \in \Nz$ there is a simple unital AH~algebra~$A$
with no dimension growth and an action $\af \colon S^1 \to \Aut (A)$
such that $\RD (\af) = d$.
\end{prp}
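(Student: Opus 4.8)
The plan is to adapt the construction in the proof of Theorem~\ref{thm:high-rd}, replacing the finite group by $S^1$ and replacing the regular representation by restriction to free orbits. Set $X = (S^1)^{\star (d+1)}$. This is the odd sphere $S^{2d+1}$ with its standard (scalar) free $S^1$-action, and the quotient is $X/S^1 = \C P^{d}$. Hence $K^1_{S^1}(X) = 0$ and $K^0_{S^1}(X) \cong K^0(\C P^d) \cong \Z[x]/(x^{d+1})$, where $x = [L]-1$ for the tautological line bundle; the structure map $R(S^1) = \Z[t,t^{-1}] \to K^0_{S^1}(X)$ sends $t \mapsto 1 + x$, so $t-1 \mapsto x$, the augmentation ideal $I(S^1) = (t-1)$ acts as the ideal $(x)$, and $I(S^1)^j K^0_{S^1}(X) = (x^j)$ is nonzero \ifo{} $j \leq d$. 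Thus $I(S^1)^d K^0_{S^1}(X) \neq 0$. My target is a simple unital AH algebra $A$ with an $S^1$-action $\af$ that (i) has the $X$-Rokhlin property, whence $\RD(\af) \leq d$ by Lemma~\ref{L_2601_XRP}, and (ii) satisfies $I(S^1)^d K^{S^1}_*(A) \neq 0$, whence $\RD(\af) \geq d$ by Corollary~\ref{C_2601_Lower}; together these force $\RD(\af) = d$.

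For the construction I would form $A = \varinjlim (A_m, \psi_{m+1,m})$ with $A_0 = C(X)$, where each $A_m$ is a finite direct sum of matrix algebras over $X$ (dimension $2d+1$) and over $S^1$ (dimension $1$), so that $A$ is AH with no dimension growth. Fix a dense sequence $x_1, x_2, \ldots$ in $X$, with free orbits $O_m = S^1 \cdot x_m \cong S^1$. As in Theorem~\ref{thm:high-rd}, each connecting map is assembled from an \emph{identity head} $C(X) \to C(X)$ carried by a fixed trivial-representation corner, together with the \emph{orbit restrictions} $C(X) \to C(O_m) \cong C(S^1)$, $f \mapsto f|_{O_m}$. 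These orbit restrictions are the continuous analogue of the point-evaluation-plus-regular-representation summands: since $O_m$ is a free $S^1$-space with one-point quotient, $K^0_{S^1}(C(O_m)) \cong \Z$ with $R(S^1)$ acting through the augmentation, so $I(S^1)$ annihilates the equivariant $K$-theory of every orbit summand, exactly as the class of the regular representation is annihilated by $I(G)$ in the finite case. The $S^1$-action is the geometric action on the $C(X)$-factors and translation on the $C(S^1)$-factors, with conjugation by suitable representations in the matrix coordinates. Letting the multiplicities tend to infinity and using density of $(x_m)$, a standard Goodearl-type fullness argument gives simplicity.

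With $A$ in hand, the two bounds follow exactly as in Theorem~\ref{thm:high-rd}. For the upper bound, the maps $\sigma_m \colon C(X) \to A_m$ sending $f$ to $f$ on the head and to the restrictions $f|_{O_i}$ on the remaining summands are unital, $S^1$-equivariant, and central; since the induced $S^1$-action on $\sigma_m(C(X))$ is, uniformly in $m$, the geometric action on $C(X)$, the central sequence $(\psi_{\infty,m}\circ\sigma_m)$ yields a unital equivariant homomorphism $C(X) \to F_\infty^{(\af)}(A)$ landing in the continuous part, i.e.\ the $X$-Rokhlin property. For the lower bound, choose $\eta \in K^0_{S^1}(X)$ with $I(S^1)^d\eta \neq 0$. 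The identity head sends $\eta$ to $\eta$ (the corner induces the identity on $K^0_{S^1}(X)$), while the orbit-restriction contributions lie in the $I(S^1)$-annihilated summands; hence $I(S^1)^d (\psi_{\infty,0})_*(\eta) = (\psi_{\infty,0})_*(I(S^1)^d\eta)$ is carried nonzero through the head at every finite stage, so it is nonzero in $\varinjlim K^{S^1}_*(A_m) = K^{S^1}_*(A)$. Thus $I(S^1)^d K^{S^1}_*(A) \neq 0$.

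The main obstacle is the construction itself. Because $S^1$ is not discrete, there is no finite-dimensional analogue of the regular representation, so the ``evaluation'' summands must genuinely be the one-dimensional-base algebras $C(S^1)$ rather than scalar matrix blocks. The real work lies in organizing the building blocks and connecting maps so that simultaneously (a) the limit is simple, (b) it remains AH with no dimension growth --- which is precisely what forces the evaluation spectra to be the $1$-dimensional orbits --- and (c) the induced $S^1$-action on the central sequence algebra is point-norm continuous, so that the central sequence of equivariant homomorphisms actually lands in $F_\infty^{(\af)}(A)$. This last continuity point is automatic for finite groups but must be verified here; it follows because the transformation rule of each $\sigma_m(f)$ is the fixed geometric action on $C(X)$, independent of $m$.
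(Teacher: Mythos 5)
Your two-bound strategy is exactly the paper's: upper bound via Lemma~\ref{L_2601_XRP} and the equivariant identification $(S^1)^{\star (d+1)} \cong S^{2d+1}$, lower bound via Corollary~\ref{C_2601_Lower} and $K^0_{S^1}(S^{2d+1}) \cong R(S^1)/I(S^1)^{d+1}$. The gap is in the construction: the inductive limit you describe is not simple, and the ``standard Goodearl-type fullness argument'' you invoke does not apply to it. In the finite group case, the evaluation summands $f \mapsto f(g \cdot x_m)\,1_{C(X)}$ produce \emph{constant} functions, which are full whenever they are nonzero; that is what makes the image of any nonzero element full after one step. Your orbit restrictions $f \mapsto f|_{O_m}$ have no such property: they inherit the zeros of $f$. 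Worse, your identity head freezes the $C(X)$-block forever: since an ideal of a direct sum is the direct sum of ideals, the part of the ideal generated by $\psi_{l,0}(f)$ lying in the $C(X)$-block of $A_l$ is contained in $M_K\bigl(C_0(X \setminus f^{-1}(0))\bigr)$ for every $l$. So if $f$ vanishes on a nonempty open set and $h \neq 0$ is supported in that set, then $\dist\bigl(\psi_{l,0}(h), \overline{A_l\,\psi_{l,0}(f)\,A_l}\bigr) \geq \|h\|$ at every stage and hence also in the limit; the limit is not simple. Nor can this be repaired by letting the $C(S^1)$-blocks feed back into the $C(X)$-blocks to ``unfreeze'' them: for $d \geq 1$ every equivariant homomorphism $C(S^1) \to M_K(C(X))$ is zero, because the image of the canonical unitary would be a weight-one unitary in an invariant corner, i.e.\ an isomorphism $E \cong E \otimes L$ of vector bundles over $X/S^1 = \C P^{d}$ (with $L$ the tautological line bundle), contradicting $c_1(E \otimes L) = c_1(E) + \rank(E)\,c_1(L)$ and $c_1(L) \neq 0$.

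The missing idea --- and the paper's actual route --- is to make the head move rather than to imitate the evaluation summands: the unitary group $U(\C^{d+1})$ acts on $S^{2d+1}$ and \emph{commutes} with the scalar $S^1$-action, so $T_n(\xi) = u_n \xi$ is equivariant for every $u_n \in U(\C^{d+1})$. The paper takes $A = \dirlim C(S^{2d+1}, M_{2^k})$ with connecting maps $f \mapsto \diag(f, f \circ T_n)$ and $(u_n)$ dense in $U(\C^{d+1})$; density plus compactness makes the image of every nonzero element full at a finite stage, so $A$ is simple, and the blocks stay homogeneous over $S^{2d+1}$ (no orbit blocks are needed at all). Your upper-bound argument (central equivariant copies of $C(S^{2d+1})$, which are now only asymptotically central) and your lower-bound argument then go through for this $A$, with one small change: the connecting maps induce multiplication by $2$, not the identity, on $K^0_{S^1}(S^{2d+1})$; this is harmless because that group is torsion-free, so $I(S^1)^d \cdot 2^m[1] \neq 0$ for all $m$.
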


\begin{proof}
Consider the direct system
\[
C (S^{2 d + 1})
 \longrightarrow C (S^{2 d + 1}, M_2)
 \longrightarrow C (S^{2 d + 1}, M_4)
 \longrightarrow \cdots.
\]
The actions are all induced from the scalar multiplication action
of $S^1$ on $S^{2 d + 1} \subseteq \C^{d + 1}$. 
The maps are all diagonal, of the form $f \mapsto \diag(f,f \circ T_n)$, where $T_n \colon S^{2 d + 1} \to S^{2 d + 1}$ is given by $T_n (\xi) = u_n \xi$ for a unitary $u_n \in U(\C^{d + 1})$. 
If the unitaries are chosen to be dense in $U(\C^{d + 1})$,
the direct limit will be simple.

The upper bound for $\RD (\af)$ comes from Lemma~\ref{L_2601_XRP}
and the observation (stated but not proved in the beginning
of the proof of \cite[Lemma~3.1]{AtSgl})
that $(S^1)^{\star n} \cong S^{2 n - 1}$ equivariantly.

We use Corollary~\ref{C_2601_Lower} to prove the lower bound.
First, it follows from \cite[Lemma 3.1]{AtSgl} that for any $n\in \N$, we have 
$ K^*_{S^1}((S^1)^{\star n} ) \cong R(S^1)/I(S^1)^n   $ as $R(S^1)$-modules. 
Therefore, whenever $n \leq d$, we have $I(S^1)^n K^*_{S^1}((S^1)^{\star ( d+1 
) }  )\neq 0$. 
Since the action is free, $K^0_{S^1}((S^1)^{\star ( d+1 ) } )$, as an abelian group, is isomorphic to 
$K^0(S^{2d+1}/S^1) = K^0(\C \mathbb{P}^{d+1}) \cong \Z^{d+2}$. 
The connecting maps in the direct system induce multiplication by $2$ on $K^0$. Now, for any $n \leq d$  and for any $m \in \N$, we more specifically have 
$I(S^1)^n \cdot  2^m [1]_{ K^0_{S^1}((S^1)^{\star (d+1)} } \neq 0$. 
Thus, $I(S^1)^n [1]_{K_0^{S^1}(A)} \neq 0$, so $I(S^1)^nK_0^{S^1}(A)\neq 0$,
 and therefore $\RD(\alpha) > n$, as claimed.
\end{proof}

\begin{qst}\label{Q_2531_S1_res}
	What happens to $\RD (\cdot )$ if one restricts the actions
	in the proof above to finite (cyclic) subgroups of~$S^1$?
\end{qst}

We now prove a version of Theorem \ref{thm:high-rd} which applies to arbitrary 
Lie groups. The proofs of Theorem \ref{thm:high-rd} as well as Proposition 
\ref{I_2601_S1} involve the action of $G$ on repeated joins, whose inductive 
limit is the model for the total space $EG$ of the classifying space for $G$. 
However, it is not clear how to use this model  to construct a simple 
$C^*$-algebra in this more general context. In Theorem \ref{thm:high-rd} this is 
done via point evaluations, 
which cannot be done in an equivariant way if the group is not discrete. In 
Proposition \ref{I_2601_S1} this is done using a transitive group action on 
the repeated join which commutes with the $S^1$ action, which can be done as we 
have an explicit identification of those repeated joins as higher dimensional 
spheres, but, to our knowledge, this cannot be done for general Lie groups. 
Instead, 
we use a different model for approximations for $EG$.
\begin{lem}\label{lem:transitive-action}
	Let $G$ be a connected compact Lie group, and let $M$ be a connected compact smooth 
	manifold endowed with a free action of $G$. Let $H < \textrm{Homeo}(M)$ be 
	the group of homeomorphisms which commute with the action of $G$ and are 
	$G$-equivariantly null-homotopic. Then $H$ 
	acts transitively on $M$. 
\end{lem}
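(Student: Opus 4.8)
The plan is to show that every $H$-orbit in $M$ is open; since the orbits partition $M$ and $M$ is connected, a nonempty open-and-closed orbit must be all of $M$, so there is a single orbit and $H$ acts transitively. Because $H$ is a group, this reduces to proving \emph{local transitivity}: it suffices to show that for each $p \in M$ the set $H \cdot p$ contains a neighborhood of $p$. Indeed, applying this at an arbitrary point $q$ of an orbit $O = H\cdot p = H\cdot q$ exhibits a neighborhood of $q$ inside $O$, so $O$ is open.

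The geometric input is that, since $G$ is a compact Lie group acting freely and smoothly on the compact manifold $M$, the quotient $\pi \colon M \to B := M/G$ is a smooth principal $G$-bundle, and in particular is locally trivial; this is the free-action case of the Slice Theorem (\cite{Mostow}). Fix $p$, put $b = \pi(p)$, and choose a $G$-equivariant local trivialization $\pi^{-1}(U) \cong U \times G$ over an open neighborhood $U$ of $b$, where $G$ acts on the second factor by left translation and $p$ corresponds to $(b,e)$. I would then move $p$ in two complementary directions. For the base direction, take a smooth vector field on $U$ with compact support and let $\psi_t$ be its flow; the map $(u,h) \mapsto (\psi_t(u), h)$ is the identity near the boundary, hence extends by the identity to a homeomorphism of $M$, commutes with the $G$-action, and is equivariantly homotopic to the identity through the flow, so it lies in $H$. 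Varying the vector field and $t$ moves $p$ to $(u',e)$ for every $u'$ in a neighborhood of $b$. For the fiber direction, use connectedness of $G$: given $g_0 \in G$, pick a path $\gamma$ in $G$ from $e$ to $g_0$ and a cutoff $\rho \colon U \to [0,1]$ equal to $1$ near $b$ and $0$ near $\partial U$, and set $c(u) = \gamma(\rho(u))$. The gauge transformation $(u,h) \mapsto (u, h\, c(u))$ extends by the identity to a homeomorphism of $M$, commutes with the $G$-action, and is equivariantly homotopic to the identity via $s \mapsto [\,(u,h)\mapsto (u, h\,\gamma(s\rho(u)))\,]$, so it too lies in $H$ and sends $(b,e)$ to $(b,g_0)$.

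Composing maps of the two kinds shows that $H \cdot p$ contains every point $(u',g_0)$ with $u'$ near $b$ and $g_0 \in G$ arbitrary, i.e. all of $\pi^{-1}(U')$ for a smaller neighborhood $U'$ of $b$; this is an open neighborhood of $p$, giving local transitivity and hence the theorem.

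The main obstacle, and the only place where the hypothesis that $G$ is connected is used, is the fiber step: to move within a single fiber while remaining in $H$, one must realize an arbitrary fiber translation by $g_0$ through a map that is \emph{equivariantly homotopic to the identity}, which requires a path in $G$ from $e$ to $g_0$. Everything else is routine verification: that the product flows and the gauge transformations are genuine homeomorphisms of $M$ (being the identity outside $\pi^{-1}(U)$), that they are $G$-equivariant, and that the flow and path parameters furnish the required equivariant homotopies to the identity.
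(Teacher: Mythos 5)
Your proof is correct and follows essentially the same route as the paper's: both invoke the free-action case of the Slice Theorem to get a local trivialization near an orbit, then generate elements of $H$ of two kinds --- base-direction homeomorphisms supported in the trivializing neighborhood, and fiber-direction gauge transformations built from a path in the connected group $G$ cut off toward the boundary --- and conclude via openness of $H$-orbits plus connectedness of $M$ (your proof has the added merit of writing out the equivariant homotopies to the identity explicitly). One cosmetic caveat: the action is only assumed to be by homeomorphisms, not smooth, so the local triviality should be taken from the topological Slice Theorem (as the paper does, citing Mostow) and your base-direction moves realized by compactly supported homeomorphisms of the base rather than flows of smooth vector fields.
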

\begin{proof}
	As $G$ is compact and the action is free, the quotient space $M/G$ is a 
	manifold. Denote by $D$ the closed unit ball in the Euclidean space 
	of dimension $\dim(M/G)$. Let $x \in M$. Using the Slice Theorem 
	(\cite[Section 3]{Mostow}), we can choose a $G$-invariant closed tubular 
	neighborhood $N \cong G \times D$ of $Gx$ 
	in $M$, where the identification with $G \times D$ is 
	such that the action on $G$ is by left translation and the action on $D$ is 
	trivial. We claim that it suffices to show that we can find homeomorphisms of $G \times 
	D$ which commute with the action of $G$, fix the boundary $G \times 
	\partial D$, and are transitive on the interior $G \times \mathrm{int}(D)$. 
	To see this, first, such homeomorphisms can be extended to homeomorphisms of $M$ which fix all 
	points outside of $N$. It follows that the orbits of the action of $H$ on $M$ are open. Because $M$ is connected, compositions of such homeomorphisms can be used to 
	obtain a transitive action on $M$, as claimed.
	
	We can construct 
	homeomorphisms of $G \times D$ which are transitive and commute with the 
	action of 
	$G$ as compositions of the following two types of homeomorphisms. For the 
	first type, suppose $h \colon D \to D$ is a homeomorphism which fixes the 
	boundary. Then we can define a homeomorphism of $G \times D$ by $T(g,x) = 
	(g,h(x))$. For the second kind, let $g_0 \in G$, and let $( g_t )_{t \in 
	[0,1]}$ be a path in $G$, with $g_0$ as given, such that $g_1 = 1$. Recall 
	that $D$ is the closed unit 
	ball in a Euclidean space, and define $T (g,x) = ( g \cdot g_{\|x\|} , x )$. 
	One readily checks that one can translate any point in the interior of $G 
	\times D$ 
	to 
	any other via a composition of such maps. 
\end{proof}

\begin{prp}\label{I_2601_ArbLgFin}
	Let $G$ be a compact connected Lie group.
	Then for every $d \in \Nz$ there is a simple unital AH~algebra~$A$ with
	no dimension growth and an action $\af \colon G \to \Aut (A)$
	such that $d \leq \RD (\af) < \I$.
\end{prp}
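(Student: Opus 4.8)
The plan is to transport the argument of Proposition~\ref{I_2601_S1} from the circle to a general connected $G$, with two substitutions: the explicit spheres $(S^1)^{\star n}\cong S^{2n-1}$ are replaced by a smooth-manifold model for the finite approximations to $EG$, and the rotation maps are replaced by the transitive group of homeomorphisms provided by Lemma~\ref{lem:transitive-action}. First I would fix a faithful unitary representation $G\subseteq U(m)$ and take as base space a complex Stiefel manifold $M=V_m(\C^N)=U(N)/U(N-m)$, a compact connected smooth manifold carrying a free $G$-action through $U(m)$. These form a nested sequence of finite CW complexes whose inductive limit $V_m(\C^\infty)$ is contractible, hence a model for $EG$; so Proposition~\ref{P_2601_EGv2} lets me choose $N$ large enough that $I(G)^d K^*_G(M)\neq 0$. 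I then fix a nonzero $\zeta\in I(G)^d K^*_G(M)$ and let $t\in\N\cup\{\infty\}$ be its order in the abelian group $K^*_G(M)$.

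Next I would build the AH algebra exactly as in the circle case. Set $A_0=C(M)$ and $A_{m+1}=M_r\otimes A_m$ for a fixed multiplicity $r\geq 2$ chosen coprime to $t$ (any $r\geq 2$ if $t=\infty$; such an $r$ always exists), with connecting maps
\[
\psi_{m+1,m}(F)=\diag\bigl(F\circ T^{(m)}_1,\, F\circ T^{(m)}_2,\,\dots,\, F\circ T^{(m)}_r\bigr),
\]
where each $T^{(m)}_i$ is a homeomorphism of $M$ belonging to the group $H$ of Lemma~\ref{lem:transitive-action}. Because $H$ acts transitively on $M$, I can enumerate a sufficiently rich family of its elements across all stages so that the forward orbits become dense; the standard simplicity argument for diagonal systems (as in \cite{Goodearl} and the circle case) then shows $A=\varinjlim A_m$ is simple. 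All building blocks have spectrum of the fixed dimension $\dim M$, so $A$ is a simple unital AH~algebra with no dimension growth. Since every element of $H$ commutes with the $G$-action on $M$, the maps $\psi_{m+1,m}$ intertwine the actions $\alpha^{(m)}_g=\alpha_g\otimes\id$ induced from the $G$-action on $M$, and these assemble into an action $\alpha\colon G\to\Aut(A)$.

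For the upper bound I would use the unital $G$-equivariant homomorphisms $\sigma_m\colon C(M)\to A_m$, $\sigma_m(f)=f\otimes 1$, whose images lie in the centers of the $A_m$. They form a central sequence of equivariant unital homomorphisms into $A$, so $\alpha$ has the $M$-Rokhlin property, and Lemma~\ref{lem:X-implies-dimrok} gives $\RD(\alpha)\leq\dim(M/G)=\dim M-\dim G<\infty$. For the lower bound I compute $K^G_*(A)=\varinjlim\bigl(K^*_G(M),(\psi_{m+1,m})_*\bigr)$. Each $T^{(m)}_i$ is homotopic to the identity through $G$-equivariant homeomorphisms (as is visible from the construction in the proof of Lemma~\ref{lem:transitive-action}), hence induces the identity on $K^*_G(M)$, so each connecting map is multiplication by $r$. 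As $r$ is coprime to the order $t$ of $\zeta$, the class of $\zeta$ is not killed in the limit, and since the structure maps are $R(G)$-linear its image lies in $I(G)^d K^G_*(A)$; thus $I(G)^d K^G_*(A)\neq 0$ and Corollary~\ref{C_2601_Lower} gives $\RD(\alpha)\geq d$. Combining the two bounds yields $d\leq\RD(\alpha)<\infty$.

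The main obstacle is the twofold demand on the connecting maps: they must (i) produce a simple limit and (ii) act as a single homothety on equivariant $K$-theory, so as not to destroy the class detecting $I(G)^d$. For non-discrete $G$ one cannot use the point evaluations of Theorem~\ref{thm:high-rd}, and the resolution is precisely Lemma~\ref{lem:transitive-action}, which supplies a transitive reservoir of $G$-equivariant homeomorphisms that are homotopic to the identity — giving density (hence simplicity) while acting trivially on $K^*_G(M)$. The remaining delicate point is that the detecting class must survive the multiplicity maps, which is why $r$ is taken coprime to the order of $\zeta$.
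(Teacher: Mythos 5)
Your proof is correct, and its overall architecture matches the paper's: the same Stiefel-manifold approximations to $EG$ (the paper's frame spaces $C_{n,m}$ are exactly your $V_m(\C^N)$), the same use of Proposition~\ref{P_2601_EGv2} to find a level where $I(G)^d K^*_G$ is nonzero, the same appeal to Lemma~\ref{lem:transitive-action} for a transitive supply of equivariant homeomorphisms homotopic to the identity (giving simplicity of the diagonal limit while acting trivially on $K^*_G$), the same central embeddings of $C(M)$ plus Lemma~\ref{lem:X-implies-dimrok} for the upper bound, and Corollary~\ref{C_2601_Lower} for the lower bound. The one genuine divergence is the mechanism ensuring the detecting class survives the limit. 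The paper builds its system over $X \times S^1$ with connecting maps $f \mapsto \diag(f, f\circ \tilde h_k, f\circ \tilde\theta_k)$, where $\theta_k$ is orientation-reversing on $S^1$: on the summand of $K^1_G(X\times S^1)$ coming from $K^0_G(X)\otimes K^1(S^1)$, the induced map is $1+1-1=1$, so every class there survives identically, with no information about torsion in $K^*_G(X)$ required. You instead stay on $X$ itself, observe that the connecting maps induce multiplication by $r$, and choose $r$ coprime to the order $t$ of your fixed class $\zeta\in I(G)^d K^*_G(X)$, so that $r^k\zeta\neq 0$ for all $k$ and the image of $\zeta$ (which lies in $I(G)^d K^G_*(A)$ by $R(G)$-linearity of the structure maps) is nonzero in the limit. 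Both mechanisms are sound; yours is more elementary, avoiding the $S^1$ factor and the suspension bookkeeping entirely, at the cost of tailoring the multiplicity to the particular class chosen, whereas the paper's construction is uniform (multiplicity $3$ throughout) and preserves an entire $K^1$ summand rather than a single class. One cosmetic remark: the triviality of $(T^{(m)}_i)^*$ on $K^*_G(M)$ is immediate from the definition of $H$ in Lemma~\ref{lem:transitive-action} (its elements are by definition $G$-equivariantly homotopic to the identity), not something one needs to extract from that lemma's proof.
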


\begin{proof}
	Fix $n$ such that $G$ embeds into the unitary group $U_n$. For any $m \geq 
	n$, define the space $C_{n,m}$ of orthonormal $n$-frames in $\C^m$:
	\[
	C_{n,m} = \left \{ (\xi_1,\xi_2,\ldots,\xi_n) \in ( \C^m )^n \mid 
	\left < \xi_j , \xi_k \right > = \delta_{j,k} \mbox{ for all } j,k \in \{1,2,\ldots,n\} \right  \} \, .
	\]
	Define an action of $U_n$ on $C_{n,m}$ as follows. For  a unitary $n\times n$ matrix
	$u = 
	( u_{j,k})_{j,k \in \{1,2,\ldots n \} }$, set
	\[
	u \cdot (\xi_1,\xi_2,\ldots,\xi_n) = 
		\left ( 
	\sum_{k=1}^n u_{1 k} \xi_k, 
	\sum_{k=1}^n u_{2 k} \xi_k ,\ldots, \sum_{k=1}^n u_{n k} \xi_k 
	\right )
	\, . 
	\]
	This action is free and continuous.
	
	Any isometric inclusion $\C^m \to \C^{m+1}$ induces an equivariant embedding $C_{n,m} \to 
	C_{n,m+1}$, and all such embeddings are homotopy equivalent.  We can thus define the 
	inductive limit space $C_n = 
	\underset{\longrightarrow}{\lim} \, C_{n,m}$ with the inductive limit 
	topology, 
	which carries a free action of $G$. We claim that $C_n$ is a model for $EG$. 
	To see this, we need to show that all homotopy groups of this space vanish.
	To that end, it suffices to show that for any $j \in \N$ there exists 
	$m_{j,n}$ such that for any $m \geq m_{j,n}$ we have $\pi_j (C_{n,m}) = 0$. 
	
	Note that $C_{1,m} \cong S^{2m-1}$. For any $m \geq n$, we have a 
	surjective map $C_{n,m} \to  C_{1,m}$, given by 
	$(\xi_1,\xi_2,\ldots,\xi_n) \mapsto \xi_1$. The fiber over a point $\xi$ 
	consists of all frames of $n-1$ vectors in $\{\xi\}^{\perp} \cong 
	\C^{m-1}$, so each fiber is homeomorphic to $C_{n-1,m-1}$. We can now prove 
	the claim by induction on $n$.  For $n=1$, we can pick $m_{j,1} = \lceil j/2 
	\rceil +1$ (where $\lceil j/2 
	\rceil$ is the smallest integer greater or equal to $j/2$).
	 Now, 
	assume by induction that for all $k<n$ there exist numbers $m_{j,k}$  such that for any $m \geq m_{j,k}$ we have $\pi_j (C_{k,m}) = 0$. Suppose 
	 $m \geq \max ( \{ m_{j,n-1} + 1 , m_{j,1} \} )$. By the inductive hypothesis, we have $\pi_j(C_{n-1,m-1}) = 0$, and  also  $\pi_j(S^{2 m-1}) = 0$. Thus,
	    by the long exact 
	sequence for the homotopy of a fibration (\cite[Theorem 4.41]{hatcher}), we 
	also 
	have $\pi_j(C_{n,m}) = 0$, as required.
	
	By Proposition \ref{P_2601_EGv2}, for any $n$ and for any $d$ there exists 
	$m$ such that $I (G)^d K^*_G (C_{n,m}) \neq 0$. Fix $d$ and fix such an 
	integer $m$. Set $X = C_{n,m}$. 
	Let $H$ be the 
	group of homeomorphisms of $X$ which commute with the action of $U_n$ and 
	are $U_n$-equivariantly null-homotopic.  By Lemma \ref{lem:transitive-action}, $H$ is transitive. We can thus 
	choose homeomorphisms $h_k \colon X \to X$  which commute with the 
	action of $G$ and such that for any $x \in X$, the set $\{h_k(x) \mid k \in \N 
	\}$ 
	is dense in $X$. Pick orientation reversing  
	homeomorphisms $\theta_k \colon S^1 \to S^1$ such that for any $x \in S^1$, $\{\theta_k(x) \mid k \in \N 
	\}$ is dense in $S^1$ . Define $\tilde{h}_k \colon X \times S^1 \to X \times S^1$ by 
	$\tilde{h}_k (x,z) = (h_k(x),z)$ and $\tilde{\theta}_k \colon X \times S^1 
	\to X \times S^1$ by $\tilde{\theta}_k(x,z) = (x,\theta_k(z))$.
	
	Consider the direct system
	\[
	C (X \times S^1)
	\longrightarrow C (X \times S^1, M_3)
	\longrightarrow C (X\times S^1, M_9)
	\longrightarrow \cdots.
	\]
	with connecting homomorphisms 
	
	\[\varphi_k \colon C(X \times S^1,M_{3^{k-1}}) 
	\to  
	C(X \times S^1,M_{3^{k}})
	\]
	 given by $\varphi_k(f) = \diag (f,f \circ 
	\tilde{h}_k,f \circ \tilde{\theta}_k)$. Denote the inductive limit by $A$. By the 
	choice of the sequences $( h_k )_{ k \in \N }$  and  $( \theta_k )_{ k \in \N }$, one can check that  if $f$ is a non-zero element coming from an algebra in one of the steps, then its image in the algebra at large enough step must be full; thus, the 
	inductive limit is simple. Because  
	$h_k$ commutes with the action of $G$ for each $k$, we have consistent continuous 
	actions of $G$ on the algebras in the inductive system, and thus an action on the direct
	limit. Because we have a central sequence of $G$-equivariant embeddings of 
	$C(X)$ into the inductive limit, by Lemma~\ref{lem:X-implies-dimrok}, the action $\alpha$ on the inductive limit 
	has 
	Rokhlin dimension at most $\dim(X/G)$, which is finite. 
	
	By our construction, the connecting maps induce isomorphisms on $ { K^1_G (C(X 
	\times S^1) ) } $. Thus, the embedding  $C(X\times 
	S^1) \to A$ from the first level induces an isomorphism of $G$-equivariant $K^1$ groups. 
	Furthermore, whenever $j \leq d$, we have $I(G)^j K^*_{G}(X) \neq 0$. Thus, 
	whenever $j \leq d$, we have $I(G)^j K^1_{G}(X \times S^1) \neq 0$, and 
	therefore $I(G)^j K^1_{G}(A) \neq 0$. 	So, by 
	Corollary~\ref{C_2601_Lower}, we have $\RD(\alpha) \geq d$. 
\end{proof}

\section{Tensor product actions} \label{sec:tensor-product-actions}
In this section, we obtain some bounds and examples involving the following two 
related situations. One situation involves a compact Lie group with actions on 
two $C^*$-algebras, $\beta^{(1)} \colon G \to \Aut(A_1)$ and $\beta^{(2)} 
\colon G \to \Aut(A_2)$, and we consider the action $\beta^{(1)} \otimes 
\beta^{(2)}$ of $G$ on the tensor product $A_1 \otimes_{\max} A_2$. The second 
case involves two compact Lie groups $H_1$ and $H_2$ with actions $\beta^{(1)} 
\colon H_1 \to \Aut(B_1)$ and $\beta^{(2)} \colon H_2 \to \Aut(B_2)$, and we 
consider the action $\beta^{(1)} \otimes \beta^{(2)}$ of $H_1 \times H_2$ on 
$B_1 
\otimes_{\max} B_2$. Those are used to construct actions of groups which have 
$S^1$ or $\Z_2$ as factors with Rokhlin dimension within a prescribed range, as 
well as examples of pairs of actions of $\Z_6$ on simple AF algebras which have 
arbitrarily large finite Rokhlin dimension, but whose tensor product action has 
the Rokhlin property.

\begin{prp}\label{P_2531_ExtTens}
For $j = 1, 2$ let $H_j$ be a compact Lie group, let $B_j$ be a separable \ca,
and let $\bt^{(j)}$ be an action of $H_j$ on~$B_j$.
Then the action
$\af \colon H_1 \times H_2 \to \Aut ( B_1 \otimes_{\max} B_2 )$
given by $\af_{h_1, h_2} = \bt^{(1)}_{h_1} \otimes \bt^{(2)}_{h_2}$ satisfies
\[
\RD (\af) 
  \leq \RD (\bt^{(1)} ) + \RD (\bt^{(2)} ) \, .
\]
\end{prp}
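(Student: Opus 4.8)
The plan is to combine the join characterization of Rokhlin dimension (Lemma~\ref{L_2601_XRP}) with a purely topological construction of an equivariant map between the relevant model spaces. We may assume $\RD (\bt^{(1)})$ and $\RD (\bt^{(2)})$ are both finite, since otherwise the inequality is trivial; write $d_j = \RD (\bt^{(j)})$ and set $X_j = H_j^{\star (d_j + 1)}$. By Lemma~\ref{L_2601_XRP}, $\bt^{(j)}$ has the $X_j$-Rokhlin property, so there is an $H_j$-equivariant unital homomorphism $\varphi_j \colon C(X_j) \to F_{\infty}^{(\bt^{(j)})}(B_j)$.

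First I would promote these to commuting equivariant homomorphisms into the central sequence algebra of the tensor product. Viewing a central sequence in $B_1$ as a central sequence of multipliers of $B_1 \otimes_{\max} B_2$, the assignment $x \mapsto x \otimes 1$ defines a unital homomorphism $\iota_1 \colon F_{\infty}(B_1) \to F_{\infty}(B_1 \otimes_{\max} B_2)$ which is equivariant for $H_1$ and fixed by $H_2$; similarly $\iota_2 \colon F_{\infty}(B_2) \to F_{\infty}(B_1 \otimes_{\max} B_2)$, $y \mapsto 1 \otimes y$. The images of $\iota_1$ and $\iota_2$ commute, since representatives of the form $(b_n \otimes 1)$ and $(1 \otimes c_n)$ commute exactly in $l^{\infty}(B_1 \otimes_{\max} B_2)$, and both land in the continuous part $F_{\infty}^{(\af)}$. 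As $C(X_1)$ and $C(X_2)$ are commutative, the commuting maps $\iota_1 \circ \varphi_1$ and $\iota_2 \circ \varphi_2$ assemble, via the universal property of the tensor product, into an $(H_1 \times H_2)$-equivariant unital homomorphism $\Phi \colon C(X_1 \times X_2) \to F_{\infty}^{(\af)}(B_1 \otimes_{\max} B_2)$, where $H_1 \times H_2$ acts on $X_1 \times X_2$ by the product of the two join actions. These verifications are routine.

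The heart of the argument, and the step I expect to be the \emph{main obstacle}, is the construction of an $(H_1 \times H_2)$-equivariant continuous map
\[
\lambda \colon X_1 \times X_2 \longrightarrow (H_1 \times H_2)^{\star (d_1 + d_2 + 1)} .
\]
I would build it by an interval-overlay (shuffle) construction. A point of $X_j = H_j^{\star (d_j + 1)}$ is a formal convex combination $\sum_i t_i h_i$ of group elements, with the weight vector in a simplex and $h_i$ forgotten when $t_i = 0$; encode the weights of the first factor as a partition of $[0,1]$ into consecutive intervals of lengths $t_0, \ldots, t_{d_1}$, and those of the second factor into intervals of lengths $s_0, \ldots, s_{d_2}$. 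Overlaying the two partitions produces a common refinement with at most $d_1 + d_2 + 1$ intervals, since the two coarser partitions contribute at most $d_1 + d_2$ interior breakpoints. To the $m$-th refinement interval, of length $r_m$, assign the pair $(h^{(1)}_{i(m)}, h^{(2)}_{j(m)}) \in H_1 \times H_2$ determined by the unique coarse intervals containing it, reading the slots of the target join from left to right. Then $\lambda$ is equivariant: the action of $(g_1, g_2)$ fixes all weights, hence all intervals, and sends each label to $(g_1 h^{(1)}_{i(m)}, g_2 h^{(2)}_{j(m)})$, which is exactly the corresponding translate in the target. The delicate point is continuity at the join identifications: when a weight tends to $0$ the corresponding interval shrinks to a point and its label is forgotten, and when breakpoints of the two partitions cross, a refinement interval of length $0$ appears, so the resulting point of the target join varies continuously. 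This is the standard continuity argument for such shuffle maps, and it is also where the count $d_1 + d_2 + 1$ (rather than $(d_1+1)(d_2+1)$) enters.

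Finally, the equivariant unital pullback $\lambda^* \colon C\bigl((H_1 \times H_2)^{\star (d_1 + d_2 + 1)}\bigr) \to C(X_1 \times X_2)$ composes with $\Phi$ to give an $(H_1 \times H_2)$-equivariant unital homomorphism $\Phi \circ \lambda^* \colon C\bigl((H_1 \times H_2)^{\star (d_1 + d_2 + 1)}\bigr) \to F_{\infty}^{(\af)}(B_1 \otimes_{\max} B_2)$. Thus $\af$ has the $(H_1 \times H_2)^{\star (d_1 + d_2 + 1)}$-Rokhlin property, and Lemma~\ref{L_2601_XRP}, applied to the group $H_1 \times H_2$ with $d = d_1 + d_2$, yields $\RD (\af) \leq d_1 + d_2$, as claimed.
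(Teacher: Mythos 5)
Your argument is correct, but its crucial second half takes a genuinely different route from the paper's. The first half (assembling the two Rokhlin maps into an $(H_1\times H_2)$-equivariant unital homomorphism $C(X_1\times X_2)\to F_{\infty}^{(\af)}(B_1\otimes_{\max}B_2)$) is essentially what the paper does, except that the paper works with a homomorphism $\pi\colon l^{\infty}(B_1)\otimes_{\max}l^{\infty}(B_2)\to l^{\infty}(B_1\otimes_{\max}B_2)$ defined coordinatewise on simple tensors; that formulation cleanly sidesteps the meaning of $x\otimes 1$ when $B_2$ is non-unital, which your appeal to ``central sequences of multipliers'' leaves slightly informal (a fixable presentational point, not a gap). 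The real divergence is in how one converts the $(X_1\times X_2)$-Rokhlin property into the bound $d_1+d_2$. The paper asserts that $\dim\bigl((X_1\times X_2)/(H_1\times H_2)\bigr)=d_1+d_2$ and invokes Lemma~\ref{lem:X-implies-dimrok}, whereas you construct an explicit $(H_1\times H_2)$-equivariant shuffle map $\lambda\colon X_1\times X_2\to (H_1\times H_2)^{\star(d_1+d_2+1)}$ and use only the join characterization, Lemma~\ref{L_2601_XRP}. Your interval-overlay construction is sound --- it is the standard map behind subadditivity of the $G$-index, and your continuity discussion correctly identifies that labels can only jump where the corresponding refinement interval has length zero, exactly where the join identifications forget them --- and it buys something real: it works uniformly for all compact Lie groups. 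By contrast, the paper's dimension count is only correct when each $H_j$ is finite or $d_j=0$: for a Lie group $H_j$ of dimension $\ell_j$ one has $\dim\bigl(H_j^{\star(d_j+1)}/H_j\bigr)=d_j(\ell_j+1)$; for instance $(S^1)^{\star 2}/S^1\cong\C\mathbb{P}^1$ has dimension $2$, not $1$, so in the positive-dimensional case Lemma~\ref{lem:X-implies-dimrok} only yields $\RD(\af)\leq d_1(\ell_1+1)+d_2(\ell_2+1)$. Thus your route, at the cost of the (elementary but longer) verification of continuity and equivariance of $\lambda$, actually establishes the stated inequality in a generality where the paper's argument, as written, gives a weaker bound.
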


\begin{proof}
	For $j=1,2$, set $d_j = \RD (\bt^{(j)})$ and  $X_j = H_j^{\star 
	(d_j+1)}$. 
	
	Consider the homomorphism 
	\[
	\pi \colon l^{\infty}(B_1) \otimes_{\max} 
	l^{\infty}(B_2) \to l^{\infty}(B_1 \otimes_{\max} B_2)
	\]
	 defined on simple 
	tensors by $\pi(f_1 \otimes f_2) (n) = f_1(n) \otimes f_2(n)$. 
	 If 
	$f_j \in  l^{\infty , ( \bt^{(j)} ) } (B_j)$ for $j=1,2$ then 
	$\pi(f_1 
	\otimes f_2) \in l^{\infty , ( \af )} (B_1 \otimes_{\max} B_2)$.
	This map induces a homomorphism $( B_{1}) _{\infty}^{(\bt^{(1)})} 	\otimes_{\max} 
	( B_{2}) _{\infty}^{(\bt^{(2)})} \to (B_1 \otimes_{\max} B_2 
	)_{\infty}^{(\af)}$ which by slight abuse of 
	notation we  also denote by $\pi$. One checks that $\pi$ furthermore respects central 
	sequences and annihilators, and thus we obtain a unital homomorphism 
	$\pi \colon F_{\infty}^{(\beta^{(1)})}(B_1) \otimes_{\max} 
	F_{\infty}^{(\beta^{(2)})}(B_2) \to F_{\infty}^{(\alpha)}(B_1 
	\otimes_{\max} B_2)$. By assumption, there exist equivariant unital 
	homomorphisms $C(X_j) \to  F_{\infty}^{(\beta^{(j)})}(B_j)$, and we thus 
	have an $H_1 \times H_2$-equivariant unital homomorphism 
	\[
	C(X_1 \times X_2) 
	\cong C(X_1) \otimes C(X_2) \to  F_{\infty}^{(\alpha)}(B_1 
	\otimes_{\max} B_2)
	\, .
	\]
	 Now, 
	 \[
	 \dim( ( X_1 \times X_2 ) / ( H_1 \times H_2 ) ) = d_1 + 
	d_2
	\, ,
	\]
	 and $H_1 \times H_2$ acts freely on 
	$X_1 
	\times X_2$. Thus, $\af$ has the $(X_1 \times X_2 )$-Rokhlin property, and therefore, by Lemma~\ref{lem:X-implies-dimrok}, we have $\RD (\af)  \leq d_1 + d_2$, as claimed.
\end{proof}

\begin{cor}\label{C_2601_TnsR}
Under the hypotheses of Proposition~\ref{P_2531_ExtTens},
if $\bt^{(2)}$ has the Rokhlin property,
then $\RD (\af) \leq \RD (\bt^{(1)} )$.
\end{cor}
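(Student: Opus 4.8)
The plan is to observe that, for a compact group, the Rokhlin property is nothing other than the vanishing of the Rokhlin dimension with commuting towers, and then to read off the corollary from Proposition~\ref{P_2531_ExtTens} with the second summand set equal to zero.

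First I would unwind the hypothesis. Saying that $\bt^{(2)}$ has the Rokhlin property means precisely that $\bt^{(2)}$ has the $H_2$-Rokhlin property, where $H_2$ carries its free left-translation action on itself. Since the one-fold join is $H_2^{\star 1} = H_2$, Lemma~\ref{L_2601_XRP} identifies this condition with $\RD (\bt^{(2)}) \leq 0$, that is, $\RD (\bt^{(2)}) = 0$. With this identification in hand, Proposition~\ref{P_2531_ExtTens} applied with $d_2 = \RD(\bt^{(2)}) = 0$ yields
\[
\RD (\af) \leq \RD (\bt^{(1)}) + \RD (\bt^{(2)}) = \RD (\bt^{(1)}),
\]
which is exactly the assertion.

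There is essentially no obstacle here: the entire content sits in the identification of the Rokhlin property with $\RD = 0$ and in Proposition~\ref{P_2531_ExtTens} itself. If one prefers not to quote the proposition as a black box, one can instead rerun its proof verbatim with $X_2 = H_2^{\star 1} = H_2$. The equivariant unital homomorphism $C(H_2) \to F_{\infty}^{(\bt^{(2)})}(B_2)$ furnished by the Rokhlin property combines, through the map $\pi$ of that proof, with the tower $C(X_1) \to F_{\infty}^{(\bt^{(1)})}(B_1)$ for $\bt^{(1)}$ to produce an $(X_1 \times H_2)$-Rokhlin structure for $\af$, where $X_1 = H_1^{\star (d_1 + 1)}$ and $d_1 = \RD(\bt^{(1)})$. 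The only point requiring a moment's care is the covering-dimension bookkeeping at the end: since $H_2$ acts transitively on itself, the quotient $H_2/H_2$ is a single point, so
\[
\dim \big( (X_1 \times H_2) / (H_1 \times H_2) \big) = \dim(X_1/H_1) + \dim(H_2/H_2) = d_1 + 0 = \RD(\bt^{(1)}),
\]
and Lemma~\ref{lem:X-implies-dimrok} then gives $\RD(\af) \leq \RD(\bt^{(1)})$.
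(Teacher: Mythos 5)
Your proposal is correct and matches the paper's approach exactly: the paper's entire proof is the observation that the Rokhlin property means $\RD(\bt^{(2)}) = 0$, after which Proposition~\ref{P_2531_ExtTens} gives $\RD(\af) \leq \RD(\bt^{(1)}) + 0$. Your additional unwinding via $H_2^{\star 1} = H_2$ and Lemma~\ref{L_2601_XRP}, and the optional rerun of the proposition's proof, are accurate but not needed beyond that one-line reduction.
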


\begin{proof}
We have $\RD (\bt^{(2)} ) = 0$.
\end{proof}

\begin{exa}\label{exl:reduction-in-dimension}
We can have strict inequality 
in Proposition~\ref{P_2531_ExtTens}. Fix nontrivial finite groups $H_1$ and $H_2$. 
Using 
Theorem \ref{thm:high-rd}, find an AF algebra $B_1$ and an action $\bt^{(1)} 
\colon H_1 \to \Aut(B_1)$ such that $0 <  \RD (\bt^{(1)} ) < \infty$. Let $B_2 = 
\mathcal{O}_2$, and fix an action  $\bt^{(2)} 
\colon H_2 \to \Aut(B_2)$ with the Rokhlin property. The corresponding action 
of $\alpha$ of $H_1 \times H_2$ on $A = B_1 \otimes B_2 \cong \OT$ has finite 
Rokhlin dimension with commuting towers, and therefore it has the 
Rokhlin property by \cite[Proposition 4.32]{GdHbSg}.
\end{exa}

\begin{rmk}\label{rmk:reduction-in-dimension}
	Example \ref{exl:reduction-in-dimension} reduces the Rokhin dimension all 
	the way down to zero. It would be interesting to see if one can find 
	examples where there is strict inequality, but the Rokhlin dimension of the 
	product action is not zero.
\end{rmk}
 
The following result is for an action formally similar to that of
Proposition~\ref{P_2531_ExtTens}.

\begin{prp}\label{P_2531_DTens}
Let $G$ be a compact group.
For $j = 1, 2$ let $B_j$ be a separable \ca,
and let $\bt^{(j)}$ be an action of $G$ on~$B_j$.
Then the action
$\af \colon G \to \Aut ( B_1 \otimes_{\max} B_2 )$,
given by $\af_{g} = \bt^{(1)}_{g} \otimes \bt^{(2)}_{g}$, satisfies
\[
\RD (\af ) \leq \min ( \RD (\bt^{(1)}), \, \RD (\bt^{(2)} ) ).
\]
\end{prp}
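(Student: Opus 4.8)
The plan is to realize the diagonal action as a restriction of the external tensor product action from Proposition~\ref{P_2531_ExtTens}. Observe that the diagonal action $\af_g = \bt^{(1)}_g \otimes \bt^{(2)}_g$ is precisely the composition of the external action $\tilde{\af} \colon G \times G \to \Aut(B_1 \otimes_{\max} B_2)$ from Proposition~\ref{P_2531_ExtTens} with the diagonal embedding $\Delta \colon G \to G \times G$, $\Delta(g) = (g,g)$. Thus $\af = \tilde{\af} \circ \Delta$, and the task reduces to understanding how the $X$-Rokhlin property behaves under restriction along $\Delta$.

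First I would recall that, by hypothesis, $\bt^{(1)}$ has the $X_1$-Rokhlin property with $X_1 = G^{\star(d_1+1)}$ where $d_1 = \RD(\bt^{(1)})$, via Lemma~\ref{L_2601_XRP}; similarly for $\bt^{(2)}$. To get the bound $\min(d_1, d_2)$ rather than $d_1 + d_2$, I would work with just one of the two factors. Suppose without loss of generality that $d_1 \leq d_2$. The key point is that an equivariant unital homomorphism $C(X_1) \to F_{\infty}^{(\bt^{(1)})}(B_1)$ induces, via the inclusion $B_1 \to B_1 \otimes_{\max} B_2$ as $b \mapsto b \otimes 1$, a homomorphism into $F_{\infty}^{(\af)}(B_1 \otimes_{\max} B_2)$: central sequences in $B_1$ remain central in the tensor product, and since the diagonal action restricts to $\bt^{(1)}$ on the first tensor factor, equivariance for $\bt^{(1)}$ under $G$ is exactly equivariance for $\af$ under the diagonal $G$. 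Hence $\af$ has the $X_1$-Rokhlin property, where $G$ now acts on $X_1 = G^{\star(d_1+1)}$ by its native (diagonal) action. Since $X_1$ is a free $G$-space with $\dim(X_1/G) = d_1$, Lemma~\ref{lem:X-implies-dimrok} yields $\RD(\af) \leq d_1 = \RD(\bt^{(1)})$.

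By symmetry, the same argument using the second factor gives $\RD(\af) \leq \RD(\bt^{(2)})$, and combining the two gives the minimum. The main obstacle, and the step requiring the most care, is verifying that the central-sequence homomorphism survives the restriction along the diagonal: one must check that a central sequence witnessing the $X_1$-Rokhlin property for $\bt^{(1)}$, once included into $B_1 \otimes_{\max} B_2$, remains central (it commutes with elements $b_1 \otimes 1$ by the original centrality, and with $1 \otimes b_2$ trivially, hence with all of $B_1 \otimes_{\max} B_2$) and remains continuous for the $G$-action (the elements $\pi(f_1 \otimes 1)$ lie in $l^{\infty,(\af)}(B_1 \otimes_{\max} B_2)$ whenever $f_1 \in l^{\infty,(\bt^{(1)})}(B_1)$, using the same $\pi$ constructed in the proof of Proposition~\ref{P_2531_ExtTens}). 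Once this is in place, the conclusion follows immediately from Lemma~\ref{lem:X-implies-dimrok}. I expect the proof to be short, essentially a one-factor specialization of the reasoning already developed for the external tensor product.
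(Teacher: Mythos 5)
Your construction of the Rokhlin homomorphism is exactly the paper's: assume without loss of generality that the minimum is attained at $\bt^{(1)}$, compose the equivariant unital homomorphism $C(X_1) \to F_{\infty}^{(\bt^{(1)})}(B_1)$ with the first-factor inclusion into $F_{\infty}^{(\af)}(B_1 \otimes_{\max} B_2)$ (via the map $\pi$ from the proof of Proposition~\ref{P_2531_ExtTens}), and check that centrality, continuity of the $G$-action, and equivariance all survive, since $\af_g(b \otimes 1) = \bt^{(1)}_g(b) \otimes 1$. Up to that point your argument and the paper's proof coincide.

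The final step, however, contains a genuine error. You invoke Lemma~\ref{lem:X-implies-dimrok} together with the claim $\dim(X_1/G) = d_1$ for $X_1 = G^{\star(d_1+1)}$. That dimension count is valid only when $G$ is finite: for a compact Lie group of positive dimension one has $\dim\bigl(G^{\star(d+1)}/G\bigr) = d\,(\dim G + 1)$, since the join has dimension $(d+1)\dim G + d$ and the free quotient removes only $\dim G$. For instance $(S^1)^{\star 2}/S^1 \cong S^3/S^1 \cong \C\mathbb{P}^1$ has dimension $2$, so for $G = S^1$ your argument yields only $\RD(\af) \leq 2\,\RD(\bt^{(1)})$, not the claimed bound. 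Moreover, Lemma~\ref{lem:X-implies-dimrok} is stated for compact \emph{Lie} groups, whereas the proposition allows an arbitrary compact group. Both problems vanish if you instead cite Lemma~\ref{L_2601_XRP}: for any compact group $G$, the $G^{\star(d_1+1)}$-Rokhlin property is \emph{equivalent} to $\RD(\af) \leq d_1$, which is exactly what your homomorphism establishes, with no need to compute any covering dimension. This is what the paper does; with that one substitution your proof is correct.
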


\begin{proof}
Assume without loss of generality that the minimum is attained at $\bt^{(1)}$. 
As in Proposition \ref{P_2531_ExtTens}, we have  a $G$-equivariant unital 
homomorphism 	
\[ F_{\infty}^{(\beta^{(1)})}(B_1) \otimes_{\max} 
F_{\infty}^{(\beta^{(2)})}(B_2) \to F_{\infty}^{(\alpha)}(B_1 
\otimes_{\max} B_2)
\, .
\]
With $r = \RD (\bt^{(1)})+1$, let $X = G^{\star r}$. We have an equivariant unital homomorphism $C(X) 
	\to  F_{\infty}^{(\beta^{(1)})}(B_1) $, and therefore also equivariant 
	unital homomorphisms 
	$C(X) \to  F_{\infty}^{(\beta^{(1)})}(B_1) \otimes 1 \to  
	F_{\infty}^{(\alpha)}(B_1 
	\otimes_{\max} B_2)$. Thus, $ \RD (\af ) \leq  \RD (\bt^{(1)})$. 
\end{proof}

For the next proposition, we need the following K{\"u}nneth-type formula. Note 
that if $G$ and $H$ are compact Lie groups, then $R(G \times H) \cong R(G) 
\otimes_{\Z} R(H)$. (This can be seen from the fact that any irreducible 
representation of $G \times H$ decomposes as a tensor product of an irreducible 
representation of $G$ by with an irreducible representation of $H$; see 
\cite[Exercise~2.36]{fulton-harris}.) The K{\"u}nneth formula does not 
generalize to equivariant $K$-theory in general. However, we want to consider 
the following case: if $A$ and $B$ are $C^*$-algebras endowed with actions of 
$G$ and $H$, we want to understand the $R(G \times H)$-module structure of 
$K_*^{G \times H}(A \otimes  B)$. To avoid technical complications, we restrict 
ourselves to the case in which at least one of those algebras is nuclear, so 
there is no question as to which tensor product is used.   Notice that 
$K_*^G(A) \otimes_{\Z} K_*^H(B)$ is naturally endowed with an $R(G) 
\otimes_{\Z} R(H)$-module structure. Likewise,  $\text{Tor}(K_*^G(A) , K_*^H(B) 
)$ is endowed with an $R(G) \otimes_{\Z} R(H)$-module structure, as can be seen 
by its construction as a homology group of an exact sequence of  $R(G) 
\otimes_{\Z} R(H)$-modules. We show here that the K{\"u}nneth exact sequence 
respects the $R(G) \otimes_{\Z} R(H)$-module structure. The map $K^G_*(A) 
\otimes_{\Z} K^H_*(B) \to  K_*^{G \times H}(A \otimes  B) $ here comes from the 
natural pairing, namely if $V$ is a representation space of $G$ and $W$ is a 
representation space of $H$, and if $p \in B(V) \otimes A$ and $q \in B(W) 
\otimes B$ are invariant projections, then $[p] \otimes [q]$ gets mapped to the 
class of $p \otimes q \in B(V \otimes W) \otimes A \otimes  B$, which is a $G 
\times H$-invariant projection.
\begin{prp}\label{P_Kunneth}
	Let $G$ and $H$ be compact Lie groups. Let $A$ and $B$ be unital $C^*$-algebras endowed with actions  $\alpha \colon G \to \Aut(A)$ and $\beta \colon H \to \Aut(B)$.  Suppose that at least one of the crossed products $A \rtimes_{\alpha} G$ and $B \rtimes_{\beta} H$ is nuclear and that this pair satisfies the K{\"u}nneth formula (not taking into account the group actions). Then there is a natural exact sequence of $R(G) \otimes_{\Z} R(H)$-modules:
	\[
	0 \to K^G_*(A) \otimes_{\Z} K^H_*(B) \to  K_*^{G \times H}(A \otimes  B) \to \text{Tor}(K^G_*(A) ,K^H_{*+1}(B) ) \to 0 \, .
	\]
\end{prp}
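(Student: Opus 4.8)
The plan is to reduce the equivariant statement to the ordinary Künneth theorem for $C^*$-algebras by passing to crossed products. Two facts drive the reduction. The first is the Green--Julg isomorphism $K^G_*(A) \cong K_*(A \rtimes_\alpha G)$, which for a compact group $G$ is natural, intertwines external products, and identifies the $R(G)$-module structure on the left with the $K_0(C^*(G))$-module structure on the right (recall $R(G) \cong K_0(C^*(G))$ since $\mathbb{C} \rtimes G = C^*(G)$). The second is the identification $C^*(G \times H) \cong C^*(G) \otimes C^*(H)$, which recovers the ring isomorphism $R(G \times H) \cong R(G) \otimes_{\Z} R(H)$ used in the statement.

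First I would record the $C^*$-algebraic identity
\[
(A \otimes B) \rtimes_{\alpha \otimes \beta} (G \times H) \cong (A \rtimes_\alpha G) \otimes (B \rtimes_\beta H) .
\]
This follows by writing the crossed product by $G \times H$ as an iterated crossed product, first by $G$ (acting as $\alpha \otimes \id_B$) and then by $H$ (acting as $\id_{A \rtimes_\alpha G} \otimes \beta$), together with the standard fact that crossing by a group commutes with tensoring by an algebra carrying the trivial action, so $(A \otimes B) \rtimes_G \cong (A \rtimes_\alpha G) \otimes B$. Since the hypothesis makes one of $A \rtimes_\alpha G$, $B \rtimes_\beta H$ nuclear, all tensor products are unambiguous and $A \otimes_{\max} B = A \otimes_{\min} B$. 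Applying Green--Julg to all three equivariant groups then converts the desired sequence into the ordinary topological Künneth sequence for the pair $(A \rtimes_\alpha G, \, B \rtimes_\beta H)$, which holds by the hypothesis that this pair is nuclear and satisfies the Künneth formula. Because Green--Julg is compatible with external products, the injective map of the Künneth sequence is carried to the equivariant external product, i.e.\ the map $K^G_*(A) \otimes_{\Z} K^H_*(B) \to K^{G \times H}_*(A \otimes B)$ described before the statement. This already yields the exact sequence as a sequence of abelian groups, with the correct degree shift in the $\text{Tor}$ term.

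It remains to promote this to an exact sequence of $R(G) \otimes_{\Z} R(H)$-modules, and this is the step I expect to be the main obstacle. The point is that the module structures on all three terms are instances of the external product in $K$-theory: $R(G) \otimes_{\Z} R(H) = K_0(C^*(G)) \otimes_{\Z} K_0(C^*(H))$ acts through the external pairing into $K_0(C^*(G) \otimes C^*(H)) = R(G \times H)$. The injective map is the equivariant external product, which is manifestly $R(G \times H)$-linear, since a class $[V] \otimes [W]$ acts on $[p] \otimes [q]$ by tensoring the two invariant projections with $V$ and $W$ separately, exactly matching tensoring $p \otimes q$ with the $(G \times H)$-representation. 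For the remaining two maps I would describe the $R(G)$- and $R(H)$-actions via the dual-coaction picture underlying Green--Julg, making each term a module over $K_0(C^*(G) \otimes C^*(H))$ in a manner compatible with external products; linearity then follows from the naturality of the Künneth sequence together with the associativity and graded-commutativity of the external Kasparov product. The care needed is in checking that the module action transported through the iterated-crossed-product isomorphism agrees on the nose with the action coming from $R(G \times H)$, for which one tracks the canonical homomorphisms $C^*(G) \to A \rtimes_\alpha G$ and $C^*(H) \to B \rtimes_\beta H$ induced by the units of $A$ and $B$.
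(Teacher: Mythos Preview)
Your reduction is the same as the paper's: apply Green--Julg to identify $K^G_*(A)\cong K_*(A\rtimes_\alpha G)$, $K^H_*(B)\cong K_*(B\rtimes_\beta H)$, and $K^{G\times H}_*(A\otimes B)\cong K_*\bigl((A\rtimes_\alpha G)\otimes(B\rtimes_\beta H)\bigr)$, and then invoke the ordinary K\"unneth sequence to get the exact sequence of abelian groups with the correct external-product map. The only divergence is in how the $R(G)\otimes_\Z R(H)$-linearity of the maps is checked. You argue via external Kasparov products and the dual-coaction description of the module action; the paper instead uses a concrete description of the $R(G)$-action on $K_*(A\rtimes_\alpha G)$ from \cite[Theorem~2.7.9]{phillips_book}: for a finite-dimensional representation $V$, the action of $[V]$ is implemented by the pair of $*$-homomorphisms $\varphi_V,\varphi_\C\colon A\to B(V\oplus\C)\otimes A$ sending $a$ to $p_V\otimes a$ and $p_\C\otimes a$, so that $[V]\cdot\eta=(\bar\varphi_\C)_*^{-1}(\bar\varphi_V)_*(\eta)$ on the crossed product. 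Since these are honest equivariant $*$-homomorphisms, naturality of the ordinary K\"unneth sequence with respect to $\bar\varphi_V\otimes\bar\psi_W$ and $\bar\varphi_\C\otimes\bar\psi_\C$ immediately gives $R(G)\otimes_\Z R(H)$-linearity of both maps. This buys a shorter and more elementary argument than yours (no KK-machinery needed), at the cost of citing a specific realization of the module structure; your route is more conceptual but your final paragraph would need to be tightened to an actual verification rather than a sketch.
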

\begin{proof}
	The fact that we have an exact sequence as claimed, thought of as an exact sequence of abelian groups, follows from the K{\"u}nneth formula using Julg's theorem \cite[Theroem 2.6.1]{phillips_book} to identify $K^G_*(A) \cong K_*(A \rtimes_{\alpha} G)$ and $K^H_*(B) \cong K_*(B \rtimes_{\beta} H)$. It remains to track the $R(G) \otimes_{\Z} R(H)$-module structure. We show that this 
	follows directly from the description of the $R(G)$-module structure on $K_*(A \rtimes_{\alpha} G)$ given in \cite[Theorem 2.7.9]{phillips_book}, which we briefly recall here. Suppose $V$ is a finite dimensional representation, and we think of $\C$ as the trivial representation. Let $p_V$ and $p_{\C}$ be the projections onto $V$ and onto $\C$ in $B(V \oplus \C)$. The maps $\varphi_V,\varphi_{\C} \colon A \to B(V \oplus \C)$, given by $\varphi_V(a) = p_V \otimes a$ and $\varphi_{\C} (a) = p_{\C} \otimes a$, induce maps $\bar{\varphi}_V,\bar{\varphi}_{\C} \colon A \rtimes_{\alpha} G \to ( B(V \oplus \C) \otimes A ) \rtimes_{\alpha} G$. For $\eta \in K_* (A \rtimes_{\alpha} G)$, we define $[V]\eta = (\bar{\varphi}_{\C})_*^{-1} (\bar{\varphi}_{V})_*$. By naturality of the K{\"u}nneth formula with respect to the maps $\varphi_V$ and $\varphi_{\C}$, as well as analogous maps for $B$, it follows that if $\eta_1 \in  K_* (A \rtimes_{\alpha} G)$ and $\eta_2 \in K_* (B \rtimes_{\beta} H )$,  and if $V$ and $W$ are finite dimensional representations of $G$ and $H$, respectively, then $[V]\eta_1 \otimes [W]\eta_2$ gets mapped to $[V \otimes W] (\eta_1 \cdot \eta_2)$, where $\eta_1 \cdot \eta_2$ here denotes the image of $\eta_1 \otimes \eta_2$ in 
	\[
	K_* ( ( A \rtimes_{\alpha} G )  \otimes ( B \rtimes_{\beta} H ) )  \cong K_*^{G \times H} ( A  \otimes  B )
	\, .
	\]
	 Likewise, the second map into the Tor group is seen by naturality to be an $R(G \times H)$-module map.
\end{proof}

\begin{prp}\label{I_2601_Prd_S1}
Let $G$ be a compact Lie group and let $d \in \Nz$.
Then there exists a simple unital AH~algebra~$A$
with no dimension growth and an action $\af \colon S^1 \times G \to \Aut (A)$
such that $\RD (\af) =  d$.
\end{prp}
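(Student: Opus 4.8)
The plan is to obtain the action on $S^1\times G$ as an external tensor product (in the sense of Proposition~\ref{P_2531_ExtTens}) of the sharp circle example of Proposition~\ref{I_2601_S1} with a Rokhlin action of $G$, letting the circle factor pin down the value $d$ through the equivariant Künneth formula of Proposition~\ref{P_Kunneth}. Let $(A_1,\gamma)$ be the simple unital AH~algebra with no dimension growth and the action $\gamma\colon S^1\to\Aut(A_1)$ with $\RD(\gamma)=d$ furnished by Proposition~\ref{I_2601_S1}; from its proof $K_0^{S^1}(A_1)$ is torsion free and there is $\eta\in I(S^1)^d$ with $\eta\cdot[1_{A_1}]\neq0$. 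I would then manufacture an algebra $A_2$ with a $G$-action $\rho$ having the Rokhlin property, put $A=A_1\otimes A_2$ and $\af_{(z,g)}=\gamma_z\otimes\rho_g$, and verify $\RD(\af)=d$.

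For $(A_2,\rho)$ I would run the construction in the proof of Proposition~\ref{I_2601_ArbLgFin}, but with the space $X=G$ (carrying the translation action) in place of a finite approximation to $EG$. The central copies $C(G)\ni f\mapsto f\otimes 1$ at each stage assemble into a central sequence of $G$-equivariant unital homomorphisms $C(G)\to A_2$, so $\rho$ has the $G$-Rokhlin property; since $\dim(G/G)=0$, Lemma~\ref{lem:X-implies-dimrok} yields $\RD(\rho)=0$. Taking the ``point-like'' maps in the connecting homomorphisms $f\mapsto\diag(f,f\circ h_k,\dots)$ to be a dense family of homeomorphisms commuting with the $G$-action (for connected $G$ the transitive, equivariantly null-homotopic family of Lemma~\ref{lem:transitive-action}; in general the simply transitive right translations) makes $A_2$ simple while keeping it AH with no dimension growth, as the building blocks are $C(G,M_{k})$ of bounded dimension. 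Finally $K_*^G(A_2)$ is a direct limit of copies of $K_G^*(G)\cong K^*(\mathrm{pt})$ under injective multiplication maps; hence $K_0^G(A_2)$ is torsion free with $[1_{A_2}]\neq0$, and (consistently with Proposition~\ref{P_2601_Ann} applied to the space $G$, where $I(G)K_G^0(G)=0$) the ideal $I(G)$ annihilates $K_*^G(A_2)$.

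The upper bound is then immediate: $\rho$ has the Rokhlin property, so Corollary~\ref{C_2601_TnsR} (with $H_1=S^1$, $H_2=G$) gives $\RD(\af)\le\RD(\gamma)=d$. For the lower bound I would invoke Corollary~\ref{C_2601_Lower} and exhibit a nonzero element of $I(S^1\times G)^dK_*^{S^1\times G}(A)$. Using $R(S^1\times G)\cong R(S^1)\otimes_{\Z}R(G)$, the element $\eta\otimes 1$ lies in $I(S^1\times G)^d$. Feeding the pair $(A_1,A_2)$ into the equivariant Künneth sequence of Proposition~\ref{P_Kunneth} (the relevant crossed products are nuclear and lie in the bootstrap class, so the proposition applies), the natural pairing $K_*^{S^1}(A_1)\otimes_{\Z}K_*^G(A_2)\hookrightarrow K_*^{S^1\times G}(A)$ is an injective $R(S^1\times G)$-module map carrying $[1_{A_1}]\otimes[1_{A_2}]$ to $[1_A]$. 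Hence $(\eta\otimes 1)\cdot[1_A]$ is the image of $\eta[1_{A_1}]\otimes[1_{A_2}]$, and this is nonzero: $\eta[1_{A_1}]\neq0$ generates a copy of $\Z$ inside the torsion-free group $K_0^{S^1}(A_1)$, and tensoring that injection with the flat (torsion-free) group $K_0^G(A_2)$ preserves injectivity while $[1_{A_2}]\neq0$. Therefore $I(S^1\times G)^dK_0^{S^1\times G}(A)\neq0$, so $\RD(\af)\ge d$, and combined with the upper bound $\RD(\af)=d$.

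The main obstacle is the middle step: producing, for an arbitrary compact Lie group $G$, a \emph{simple} AH~algebra with no dimension growth that carries an honest Rokhlin $G$-action and has torsion-free equivariant $K$-theory. Simplicity forces the transitive-homeomorphism device of Lemma~\ref{lem:transitive-action} in place of point evaluations (which are unavailable equivariantly when $G$ is not discrete), and one must check that these connecting maps preserve the centrality and equivariance of the copies of $C(G)$ and keep $K_0^G$ torsion free. The disconnected case needs extra care, since Lemma~\ref{lem:transitive-action} is stated for connected $G$; there I would argue directly with the right translations. By contrast, once the $R(S^1)\otimes_{\Z}R(G)$-module structure of Proposition~\ref{P_Kunneth} is available, the Künneth bookkeeping and the tensor non-vanishing are routine.
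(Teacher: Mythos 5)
Your proposal is correct and follows essentially the same route as the paper's proof: there, too, $A = B \otimes D$ with $B$ the circle example of Proposition~\ref{I_2601_S1} and $D$ the inductive limit of $C(G,M_{2^k})$ along right-translation twisted diagonal maps (giving a Rokhlin action of $G$), the upper bound comes from Corollary~\ref{C_2601_TnsR}, and the lower bound from Corollary~\ref{C_2601_Lower} together with the K{\"u}nneth formula of Proposition~\ref{P_Kunneth}. The only divergence is where the K{\"u}nneth formula is applied: the paper computes $K^*_{S^1 \times G}(S^{2d+1} \times G)$ at the first stage of the inductive system and tracks the classes $N[1]$ through the connecting maps, whereas you apply it directly to the limit algebras, using injectivity of the K{\"u}nneth map together with torsion-freeness (flatness) of the equivariant $K$-theory of the two factors --- a harmless variation of the same argument.
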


\begin{proof}
	Use Proposition \ref{I_2601_S1} to construct a simple unital AH algebra $B$ with no dimension growth and an action  $\beta \colon S^1 \to \Aut(B)$ such that $\RD(\beta) = d$.
We construct a simple unital AH~algebra~$D$ with no dimension growth along with an action $\gm$ of $G$ on $D$ 
 which has the Rokhlin property, as follows. Pick a sequence $(h_1,h_2,\ldots)$ in $G$ such that each tail of this sequence is dense in $G\smallsetminus \{1\}$. We construct an inductive system as follows, with algebras as shown:
 \[
 C (G)
 \longrightarrow C (G, M_2)
 \longrightarrow C (G, M_4)
 \longrightarrow \cdots.
 \]
 The maps are all diagonal, of the form $f \mapsto \diag(f,f \circ T_n)$, with $T_n(g) = gh_n$. The maps $T_n$ are all equivariant with respect to the left translation action. As the connecting maps are all equivariant, we obtain an action $\gamma$ of $G$ on the inductive limit. Because each tail of the sequence $(h_1,h_2,\ldots)$ is dense in $G\smallsetminus \{1\}$, the inductive limit is simple. 
 
 We now set $A  = B \otimes D$, with the action $\alpha = \beta \otimes \gamma$ of $S^1 \times G$. By Corollary~\ref{C_2601_TnsR}, we have  $\RD ( \alpha ) \leq d$. For the lower bound, we claim that if $n \leq d$ then 
$I( S^1 \times G)^n K_{S^1 \times G}^*(A) \neq 0$; it then follows from Corollary~\ref{C_2601_Lower} that $\RD(\alpha) \geq d$. 
To prove the claim, we first describe the $R(S^1 \times G)$-module structure of $K_{S^1 \times G}^*(S^{2d+1} \times G)$. Notice that $K_G^0(G) = R(G)/I(G) \cong \Z$ and $K_G^1(G) = 0$. 
 Because $K_{S^1}^*(S^{2d+1})$ and $K_G^*(G)$ are both torsion free as abelian groups, by Proposition~\ref{P_Kunneth} we have 
 \[
 K_{S^1 \times G}^*(S^{2d+1} \times G) \cong K_{S^1}^*(S^{2d+1}) \otimes K_G^*(G) 
 \, ,
 \]
  so 
  \[
  K_{S^1 \times G}^0(S^{2d+1} \times G) \cong  R(S^1)/I(S^1)^d \otimes_{\Z} R(G)/I(G)
  \, .
  \]
  Note that $I(S^1 \times G) \supseteq I(S^1) \otimes_{\Z} R(G)$. Thus, if $n \leq d$ then for any positive integer $N$, we have 
  \[
  I(S^1 \times G)^n N [1]_{ K_{S^1 \times G}^*(S^{2d+1} \times G) } \neq 0
  \, .
  \]
   By the choice of the connecting maps, it then follows that $I(S^1 \times 
   G)^n K^{S^1 \times G}_0(A) \neq 0$. This proves the claim.
\end{proof}

\begin{prp}
	\label{P_2601_Prd_Z2}
	For any finite group $G$ of odd order and for any $m \in 
	\N$, there exists a simple 
	unital AF algebra $A$ with unique trace and an action $\alpha \colon \Z_2 \times G \to \Aut(A)$ such that  $m \leq 	\RD(\alpha) \leq 2m+2$. 
\end{prp}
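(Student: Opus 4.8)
The plan is to combine the $\Z_2$ construction from Proposition~\ref{I_2601_Z2} with a Rokhlin-property action of the odd-order group $G$, using the tensor product machinery of Section~\ref{sec:tensor-product-actions}. First I would invoke Proposition~\ref{I_2601_Z2} to obtain a simple unital AF algebra $B$ with unique trace together with actions $\beta_m \colon \Z_2 \to \Aut(B)$ satisfying $m \leq \RD(\beta_m) \leq 2m+2$ for each $m \in \N$. Next, since $G$ has odd order, I would construct a simple unital AF algebra $D$ with unique trace carrying an action $\gamma \colon G \to \Aut(D)$ with the Rokhlin property; this is done exactly as in the construction of the algebra $D$ in the proof of Proposition~\ref{I_2601_Prd_S1}, picking a sequence $(h_1,h_2,\ldots)$ in $G$ with each tail dense in $G \smallsetminus \{1\}$ and forming the inductive limit of $C(G,M_{2^k})$ with diagonal connecting maps $f \mapsto \diag(f, f \circ T_n)$, $T_n(g) = g h_n$. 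Then I would set $A = B \otimes D$ with the action $\alpha = \beta_m \otimes \gamma$ of $\Z_2 \times G$.

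The upper bound follows immediately from Corollary~\ref{C_2601_TnsR}: since $\gamma$ has the Rokhlin property, $\RD(\gamma) = 0$, so $\RD(\alpha) \leq \RD(\beta_m) \leq 2m+2$. The algebra $A$ is a simple unital AF algebra because $B$ and $D$ are both simple AF with unique trace; the tensor product of two simple unital AF algebras with unique trace is again simple unital AF with unique trace (the Elliott invariant computation gives $K_0(A) \cong K_0(B) \otimes K_0(D)$ by the Künneth formula since everything is torsion free, $K_1(A) = 0$, and the trace is the product trace).

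For the lower bound I would show $I(\Z_2 \times G)^m K_{\Z_2 \times G}^*(A) \neq 0$ and then apply Corollary~\ref{C_2601_Lower} to conclude $\RD(\alpha) \geq m$. The key input is the Künneth-type formula of Proposition~\ref{P_Kunneth}, applied to the $\Z_2$-action on $B$ and the $G$-action on $D$. Since $\gamma$ has the Rokhlin property, one computes $K_G^*(D)$ as the equivariant $K$-theory of the model built from $C(G)$; as in the proof of Proposition~\ref{I_2601_Prd_S1}, the relevant feature is that $K_G^0(D)$ is annihilated by $I(G)$ (because the Rokhlin property forces the $R(G)$-module structure to factor through $R(G)/I(G) \cong \Z$, by Proposition~\ref{P_2601_Ann} with $X = G$, noting $I(G) K_G^0(C(G)) = 0$), while $K_G^1(D) = 0$. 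Because $K_{\Z_2}^*(B)$ and $K_G^*(D)$ are torsion free, Proposition~\ref{P_Kunneth} yields $K_{\Z_2 \times G}^*(A) \cong K_{\Z_2}^*(B) \otimes_{\Z} K_G^*(D)$ as $R(\Z_2) \otimes_{\Z} R(G)$-modules. Using $I(\Z_2 \times G) \supseteq I(\Z_2) \otimes_{\Z} R(G)$ and the fact that $I(\Z_2)^m$ does not annihilate the relevant class in $K_{\Z_2}^0(B)$ (which is guaranteed by the lower-bound part of Proposition~\ref{I_2601_Z2}), I would conclude that $I(\Z_2 \times G)^m$ does not annihilate $K_{\Z_2 \times G}^0(A)$. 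The main obstacle is verifying that the nonzero class surviving in $K_{\Z_2}^0(B)$ tensors with a nonzero class in $K_G^0(D)$ to give something nonzero in the product module, and checking that the $I(\Z_2)^m$-action on the tensor factor is faithful on that class; this requires care that the odd order of $G$ does not introduce unexpected torsion or annihilation, but since everything is torsion free the tensor product of the nonzero $R(\Z_2)$-module element with the generator of $K_G^0(D) \cong \Z$ remains nonzero.
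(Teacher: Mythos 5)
There is a genuine gap, and it sits exactly at the point you flagged as ``requiring care'': your choice of $D$ destroys the lower bound. The algebra $D$ you build by copying the construction from the proof of Proposition~\ref{I_2601_Prd_S1} is the limit of $C(G,M_{2^k})$ along maps $f \mapsto \diag(f, f\circ T_n)$, and these connecting maps induce multiplication by $2$ on $K_G^0(C(G)) \cong \Z$. Hence $K_G^0(D) \cong \Z[1/2]$, not $\Z$ as you assert. Meanwhile, the class in $K_{\Z_2}^0(B)$ witnessing $I(\Z_2)^m K_{\Z_2}^0(B) \neq 0$ is $2$-torsion: it comes from $I(\Z_2)^m/I(\Z_2)^{m+1} \cong \Z_2$ inside $K^0_{\Z_2}\bigl(\Z_2^{\star(2m+3)}\bigr) \cong R(\Z_2)/I(\Z_2)^{m+1}$, a group which is not torsion free --- so your claim that $K^*_{\Z_2}(B)$ is torsion free is also false; the torsion is the whole point of the construction. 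The K\"unneth sequence of Proposition~\ref{P_Kunneth} still degenerates, since the Tor term vanishes as soon as \emph{one} factor is torsion free (and $K_G^*(D)\cong\Z[1/2]$ is), but it then gives $K^*_{\Z_2\times G}(A) \cong K^*_{\Z_2}(B) \otimes_\Z \Z[1/2]$, and a $2$-torsion class tensored with $\Z[1/2]$ dies: $x \otimes 1 = (2x)\otimes\frac{1}{2} = 0$. Consequently $I(\Z_2\times G)^m K^*_{\Z_2\times G}(A) = 0$ for your $A$, and Corollary~\ref{C_2601_Lower} yields no lower bound whatsoever.

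A telling warning sign is that your argument never uses the hypothesis that $|G|$ is odd; that hypothesis must enter through the $K$-theoretic multiplicity of the connecting maps defining $D$. The paper's proof takes $D$ to be the limit of $C(G)\otimes M_n^{\otimes k}$ with $n = |G|$, with connecting maps built from the regular representation, $\varphi(f)(g) = \sum_{h\in G} f(gh)\otimes e_{hh}$. These multiply $K$-theory by the odd number $n$, and multiplication by an odd number is an isomorphism on the $\Z_2$-valued obstruction group (equivalently, $\Z_2 \otimes \Z[1/n] \cong \Z_2$ for odd $n$), so the obstruction survives into the limit. With that replacement --- and with the K\"unneth computation carried out on the building block $\Z_2^{\star(2m+3)} \times G$ while tracking the connecting maps, as the paper does --- the rest of your outline (Proposition~\ref{I_2601_Z2} for $B$, Corollary~\ref{C_2601_TnsR} for the upper bound, Corollary~\ref{C_2601_Lower} for the lower bound) goes through and coincides with the paper's proof.
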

\begin{proof}
	The proof follows the same lines as that of Proposition~\ref{I_2601_Prd_S1}. Fix $m \in \N$. 
	
	Use Proposition \ref{I_2601_Z2} to construct a simple unital AF algebra $B$ 
	with unique trace and an action  $\beta \colon \Z_2 \to \Aut(B)$ such that 
	$m \leq \RD(\beta) \leq  2m+2$.
  Construct $D$ in a manner similar to that of the proof of 
  Proposition~\ref{I_2601_Prd_S1}, as follows. Let $n$  be the cardinality of 
  $G$. We consider the inductive system 
	 \[
	C (G)
	\longrightarrow C (G) \otimes M_n
	\longrightarrow C (G) \otimes M_n \otimes M_n
	\longrightarrow \cdots
	\, ,
	\]
	with the connecting maps given as follows. Label the entries of $M_n$ by the group elements of $G$. 
	Define  $\varphi \colon C(G) \to C(G) \otimes M_n$ by 
	\[
	\varphi(f)(g) = \sum_{h \in G} f(gh) \otimes e_{hh}
	. 
	\]
	For all $k \in \Nz$, define $\varphi_{k+1,k} \colon C(G) \otimes M_n^{\otimes k} \to C(G) \otimes M_n \otimes M_n^{\otimes k}$ by 
	$\varphi_{k+1,k} = \varphi \otimes \id_{ M_n^{\otimes k} }$. 
	The inductive limit is the UHF algebra of type $n^{\infty}$. 
	The connecting maps are all equivariant with respect ot the left action of $G$ on $C(G)$, so we obtain an action of $G$ on the inductive limit with the Rokhlin property. 
	 We now set $A  = B \otimes D$, with the action $\alpha = \beta \otimes \gamma$ of $\Z_2 \times G$. By Corollary~\ref{C_2601_TnsR}, we have  $\RD ( \alpha ) \leq 2m+2$. 
	 
	 For the lower bound, we claim that 
	 $I(  \Z_2 \times G)^m  K^{\Z_2 \times G}_*(A) \neq 0$; it then  follows 
	 from Corollary~\ref{C_2601_Lower} that $\RD(\alpha) \geq m$. 
	 
	 We prove the claim. By Proposition~\ref{P_Kunneth} we have 
	 
	 \[K_{\Z_2 \times G}^*(\Z_2^{\star ( 2m+3 )} \times G) \cong K_{\Z_2}^*(\Z_2^{\star ( 2m+3 )}) \otimes K_G^*(G)
	 \, ,
	 \]
	  so 
	  \[
	  K_{\Z_2 \times G}^0(\Z_2^{\star ( 2m+3 )} \times G) \cong  R(\Z_2)/I(\Z_2)^{m+1} \otimes_{\Z} R(G)/I(G)
	  \, .
	  \]
	   We have $I(\Z_2 \times G) \supseteq I(\Z_2) \otimes_{\Z} R(G)$. 
	 Thus, using the facts about  $K$-theory of projective spaces used in the 
	 proof of Proposition \ref{I_2601_Z2}, and the fact that $R(G)$ is torsion 
	 free, we have 
	 \[
	 \left ( I(\Z_2) \otimes R(G) \right )^m 
	 K_{\Z_2 \times G}^*(\Z_2^{\star ( 2m+3 )}  \times G)  \cong \Z_2
	 \, .
	 \]
	 We conclude that 
	 $
	 I(\Z_2 \times G)^m  K_{\Z_2 \times G}^*(\Z_2^{\star ( 2m+3 )}  \times G)  
	 $
	 surjects onto $\Z_2$, and in particular is nonzero.
	  The connecting maps induce multiplication by $n$ on $K$-theory, and as 
	  $n$ is assumed to be odd, they act trivially on  $I(\Z_2 \times G)^m  
	  K_{\Z_2 \times G}^*(\Z_2^{\star ( 2m+3 )}  \times G)$. It thus follows 
	  that $I(\Z_2 \times G)^m K^{\Z_2\times G}_0(A) \neq 0$.  This completes 
	  the proof of the claim, and of the proposition.
\end{proof}

\begin{rmk}
	\label{rmk-product-action-different-dimensions}
	In the proof of Proposition \ref{P_2601_Prd_Z2} we showed that whenever $G$ is a finite group of odd order $n$, we can find actions of groups of the form $\Z_2 \times G$ on tensor products $B \otimes D$, with $B$ a simple AF algebra and $D$ the UHF algebra of type $n^{\infty}$, such that  the 
	Rokhlin dimension with commuting towers of the action of $\Z_2 \times G$  
	lies within a certain range, while the restriction to the action of the 
	second factor has the Rokhlin property. The use of $\Z_2$ allowed us to 
	explicitly give both lower and upper bounds, because of the explicit 
	knowledge of the 
	 $K$-theory of projective spaces. If we don't care about the 
	upper bounds, then an adaptation of the proof provides the following 
	statement.
	Let $p$ be a prime number, let $G$ be a finite group whose order is not 
	divisible by $p$, and let $d \in \Nz$. Denote by $n$ the order of $G$ and 
	denote by $D$ the UHF algebra of type $n^{\infty}$. Then there exists  an 
	action $\beta$ of $\Z_p$ on a simple AF algebra with unique trace and an 
	action  $\gamma$ of $G$ on $D$ such that, with $A = B \otimes D$ and 
	$\alpha = \beta \otimes \gamma$, we have $\RD(\alpha)> d$,  and the 
	restriction to the factor $G$ has the Rokhlin property. 
	
	The main change in the argument
	requires the fact that $I (\Z_p)^m / I (\Z_p)^{m + 1} \cong \Z_p$.
	As we do not know a reference, we briefly describe an argument.
	It works for any odd~$p$, not necessarily prime.
	The statement also holds when $p$ is even.
	Since we do not need that case, and the proof is more complicated,
	we omit it.
	
	We write $R (\Z_p) \cong \Z [\sigma] / (1 - \sigma^p) \Z [\sigma]$.
	Set $\lambda = 1 - \sigma$.
	Since $p$ is odd,
	\begin{equation}\label{Eq_3623_M1}
		\sum_{k = 1}^{p - 1} \binom{p}{k} (-1)^k
		= \sum_{k = 0}^{p} \binom{p}{k} (-1)^k
		= 0.
	\end{equation}
	Using the fact that $p$ is odd at the second step,
	$\sigma^p = 1$ at the third step,
	and~(\ref{Eq_3623_M1}) at the fifth step, we get
	\[
	\begin{split}
		\lambda^{p}
		& = (1 - \sigma)^p
		= 1 + \sum_{k = 1}^{p - 1} \binom{p}{k} (-1)^k \sigma^k - \sigma^p
		= \sum_{k = 1}^{p - 1} \binom{p}{k} (-1)^k (1 - \lambda)^k
		\\
		& = \sum_{k = 1}^{p - 1} \binom{p}{k} (-1)^k
		\left[ 1 + \sum_{j = 1}^{k} \binom{k}{j} (-1)^j \lambda^j \right]
		= \sum_{k = 1}^{p - 1} \binom{p}{k} (-1)^k
		\cdot \sum_{j = 1}^{k} \binom{k}{j} (-1)^j \lambda^j.
	\end{split}
	\]
	This expresses $\lambda^p$ as an integer combination
	$\lambda^p = \sum_{j = 1}^{p - 1} n_j \lambda^j$, with
	\begin{equation}\label{Eq_3623_N_1}
		n_1 = \sum_{k = 1}^{p - 1} \binom{p}{k} \cdot (-1)^{k + 1} \cdot k.
	\end{equation}

	The elements $\lambda, \lambda^2, \ldots, \lambda^{p - 1}$
	are easily seen to be linearly independent (expand $(1 - \sigma)^j$),
	and the expression $\lambda^p = \sum_{j = 1}^{p - 1} n_j \lambda^j$
	implies that their integer span is $I (\Z_p)$.
	Therefore there are direct sum decompositions, as abelian groups,
	\[
	I (\Z_p) = \bigoplus_{k = 1}^{p - 1} \Z \lambda^k
	\andeqn
	I (\Z_p)^2 = \bigoplus_{k = 2}^{p} \Z \lambda^k	\, .
	\]
	
	We now claim that 
	\[
	\sum_{k = 1}^{p - 1} \binom{p}{k} \cdot (-1)^{k + 1} \cdot k = - p
	\, .
	\]
	To prove the claim, set $h (x) = (1 - x)^p$.
	Then $\sum_{k = 1}^{p} \binom{p}{k} \cdot (-1)^{k} \cdot k = h'(1) = 0$.
	The claim follows.
	The claim and~(\ref{Eq_3623_N_1}) imply that 
	\[
	I (\Z_p)^2 = p \Z \lambda \oplus \bigoplus_{k = 2}^{p - 1} \Z \lambda^k
	\, .
	\]
	Therefore $I (\Z_p) / I (\Z_p)^2 \cong \Z_p$.
	The same argument shows that for any $m \in \N$
	we have $I (\Z_p)^m/I (\Z_p)^{m + 1} \cong \Z_p$.
\end{rmk}

\begin{exa}\label{E_2601_StrictSub}
	Strict inequality can occur in 
	Proposition~\ref{P_2531_DTens}. Our example involves actions of $\Z_6$ on a simple AF algebra with unique trace.
	Fix $d \geq 1$. 
	Using Proposition \ref{P_2601_Prd_Z2} and Remark \ref{rmk-product-action-different-dimensions}, we choose algebras and actions as follows. 
	\begin{enumerate}
		\item We choose a simple AF algebra with unique trace $B_1$ with an action $\beta_1$ of $\Z_2$, and an action $\gamma_1$ of $\Z_3$ on $D_1 = M_{3^{\infty}}$ with the Rokhlin property, such that the action $\alpha_1 = \beta_1 \otimes \gamma_1$ of $\Z_2 \times \Z_3$ on $A_1 = B_1 \otimes D_1$ satisfies $\RD(\alpha_1) > d$.
		\item We choose a simple AF algebra with unique trace $B_2$ with an action $\beta_2$ of $\Z_3$, and an action $\gamma_2$ of $\Z_2$ on $D_2 = M_{2^{\infty}}$ with the Rokhlin property, such that the action $\alpha_2 = \gamma_2 \otimes \beta_2$ of $\Z_2 \times \Z_3$ on $A_2 = D_2 \otimes B_2$ satisfies $\RD(\alpha_2) > d$.		
	\end{enumerate}
	The  action $\alpha_1 \otimes \alpha_2$ of the product $\Z_2 \times \Z_3 \cong \Z_6$ on the tensor product $A_1 \otimes A_2$ can be rewritten as a tensor product action of $\Z_6$ on $(B_1 \otimes B_2) \otimes (D_1 \otimes D_2)$. The action on the factor $D_1 \otimes D_2$ has the Rokhlin property, and therefore so does $\alpha_1 \otimes \alpha_2$. Thus $\RD ( \alpha_1 \otimes \alpha_2 ) = 0$. 
\end{exa}

\begin{qst}
	Does the phenomenon in Example \ref{E_2601_StrictSub} occur when the group is $\Z_p$ for some prime $p$?
\end{qst}

\section{The commutative case} \label{sec:commutative}
Some argument is required to show that all possible Rokhlin dimensions can be achieved for compact Lie group actions on commutative $C^*$-algebras, or more specifically, that for any $n \in \Nz$, the canonical action of $G$ on $C^*(G^{\star (n+1)}) $ has Rokhlin dimension $n$. The fact that $C(G^{\star (n+1)})$ with the canonical action is universal for Rokhlin dimension $n$ a priori only means that this action has Rokhlin dimension at most $n$. We show here that it is exactly $n$. More generally, we show that Rokhlin dimension for an action on $C(Y)$ for $Y$ compact and metrizable coincides with the so-called $G$-index of the action, shifted by $1$, a quantity already introduced in the theory of transformation groups. We recall the definition.
\begin{dfn}
	Let $G$ be a compact Lie group, and let $Y$ be a compact metrizable space equipped with a continuous action of $G$. The \emph{$G$-index} of $Y$ (as a $G$-space) is defined to be the least integer $n$ such that there exists a $G$-equivariant continuous map from $Y$ to $G^{\star n}$. We denote the $G$-index of $Y$ by $\indG (Y)$. 
\end{dfn}
We refer the reader to \cite[Section 6.2]{Matousek} for a discussion of the $G$-index, although the discussion in this reference is restricted to finite groups. 

\begin{thm}
	\label{thm:commutative-case}
	Let $G$ be a compact Lie group and let $Y$ be a compact metrizable space 
	equipped with a continuous action of $G$.  Denote the induced action of $G$ 
	on $C(Y)$ by $\alpha \colon G \to \Aut(C(Y))$. Then $\RD(\alpha) = 
	\indG(Y)-1$.
\end{thm}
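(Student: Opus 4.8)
The plan is to prove the two inequalities $\RD(\af) \leq \indG(Y) - 1$ and $\RD(\af) \geq \indG(Y) - 1$ separately, exploiting the characterization of Rokhlin dimension via the $X$-Rokhlin property from \Lem{L_2601_XRP} together with the definition of the $G$-index.

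For the upper bound, suppose $\indG(Y) = n$, so there is a $G$-equivariant continuous map $\psi \colon Y \to G^{\star n}$. I would like to conclude that $\af$ has the $G^{\star n}$-Rokhlin property, which by \Lem{L_2601_XRP} gives $\RD(\af) \leq n - 1$. The key observation is that for the commutative algebra $C(Y)$, the central sequence algebra $F_{\infty}^{(\af)}(C(Y))$ is large: in fact the equivariant map $\psi$ dualizes to an equivariant unital homomorphism $\psi^* \colon C(G^{\star n}) \to C(Y)$, and since $C(Y)$ is abelian, the image of $C(Y)$ under the diagonal embedding into $C(Y)_\infty$ is automatically central. Thus the constant-sequence inclusion $C(Y) \hookrightarrow F_{\infty}^{(\af)}(C(Y))$ composed with $\psi^*$ yields a unital equivariant homomorphism $C(G^{\star n}) \to F_{\infty}^{(\af)}(C(Y))$, establishing the $G^{\star n}$-Rokhlin property. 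This direction should be essentially formal.

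For the lower bound, I would argue contrapositively: if $\RD(\af) \leq n - 1$, then I must produce a $G$-equivariant continuous map $Y \to G^{\star n}$, showing $\indG(Y) \leq n$. By \Lem{L_2601_XRP}, $\RD(\af) \leq n - 1$ means $\af$ has the $G^{\star n}$-Rokhlin property, so there is a unital equivariant homomorphism $\Phi \colon C(G^{\star n}) \to F_{\infty}^{(\af)}(C(Y)) = C(Y)_\infty \cap C(Y)'$. Since $C(Y)$ is commutative the commutant condition is vacuous, and $\Phi$ lifts to an approximately multiplicative, approximately equivariant sequence of unital completely positive maps $C(G^{\star n}) \to C(Y)$. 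The task is to upgrade such an approximate equivariant unital homomorphism into an honest $G$-equivariant continuous map $Y \to G^{\star n}$, which on the level of spaces is a genuine equivariant map into the join. This is where I expect the main obstacle to lie: one needs an equivariant semiprojectivity or equivariant stability statement for the commutative $C^*$-algebra $C(G^{\star n})$ — equivalently, that the finite CW complex $G^{\star n}$ is an equivariant ANR, so that a sufficiently good approximate equivariant map can be perturbed to an exact one. The abstract showing this relies on ``an equivariant semiprojectivity argument,'' which matches this expectation.

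Concretely, for the lower bound I would fix a finite generating set and tolerance, extract from the sequence lifting $\Phi$ a single unital completely positive equivariant-up-to-$\ep$ map $\ph \colon C(G^{\star n}) \to C(Y)$ that is multiplicative up to $\ep$, and then invoke equivariant semiprojectivity of $C(G^{\star n})$: for $\ep$ small enough, such a map is close to (and hence, via a standard homotopy, determines) a genuine unital $G$-equivariant homomorphism $C(G^{\star n}) \to C(Y)$. Taking Gelfand spectra, this is precisely a $G$-equivariant continuous map $Y \to G^{\star n}$, giving $\indG(Y) \leq n$. Combining the two inequalities yields $\RD(\af) = \indG(Y) - 1$. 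The care needed in the lower bound is that semiprojectivity must be applied \emph{equivariantly}, controlling both the near-multiplicativity and the near-equivariance simultaneously; the freeness of the $G$-action on $G^{\star n}$ is what makes this possible and is the crux of the argument.
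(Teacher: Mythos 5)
Your skeleton is the same as the paper's: both inequalities via \Lem{L_2601_XRP}, the upper bound by dualizing an equivariant map $Y \to G^{\star n}$ through the constant-sequence embedding (using that centrality is automatic for commutative $C(Y)$), and the lower bound by converting the Rokhlin data into a genuine equivariant map $Y \to G^{\star n}$. The gap is the step you yourself flag as the crux and then invoke as a black box: ``equivariant semiprojectivity of $C(G^{\star n})$,'' i.e.\ the claim that a sufficiently multiplicative, sufficiently equivariant u.c.p.\ map $C(G^{\star n}) \to C(Y)$ is close to an exactly $G$-equivariant unital homomorphism. You neither prove this nor give a precise reference, and it is the entire technical content of the theorem --- everything else in your argument is formal --- so as written the proposal assumes what has to be shown. (The claim is in fact true: the paper's closing remark notes that the theorem can be deduced from Theorems 4.5 and 5.13 of \cite{GHTW}; but one must either cite that or prove it.)

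For comparison, the paper fills exactly this hole using only \emph{non-equivariant} semiprojectivity plus a geometric averaging trick. Since $C(Y)$ is commutative, $F_{\infty}^{(\alpha)}(C(Y))$ is a quotient of the commutative algebra $l^{\infty,(\alpha)}(C(Y))$, and since $X = G^{\star n}$ is a compact ANR, $C(X)$ is semiprojective in the category of commutative unital $C^*$-algebras (\cite{blackadar-shape-theory}); hence the equivariant unital homomorphism $C(X) \to F_{\infty}^{(\alpha)}(C(Y))$ lifts to honest unital homomorphisms $C(X) \to C(Y)$, i.e.\ continuous maps $T \colon Y \to X$, which are only \emph{asymptotically} equivariant (averaging a homomorphism lift over $G$ would destroy multiplicativity, which is why the correction must be done on the level of spaces). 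Exact equivariance is then restored geometrically: by Mostow's theorem $X$ embeds equivariantly into a Euclidean $G$-space $\R^N$; by Jaworowski there is a $G$-invariant neighborhood $U \supseteq X$ with an equivariant retraction $r \colon U \to X$; and if $T$ is equivariant up to a small enough tolerance, then for each $y$ the Haar average $\int_G g^{-1} \cdot T(g \cdot y)\, dg$ lies in a small convex ball inside $U$, so that $\tilde{T}(y) = r\bigl( \int_G g^{-1} \cdot T(g \cdot y)\, dg \bigr)$ is an exactly equivariant continuous map $Y \to X$. This is precisely a proof of the ``equivariant ANR'' perturbation statement you wanted; to complete your proposal, either reproduce such an argument or replace the black box by the citation to \cite{GHTW}.
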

\begin{proof}
	If there exists an equivariant map from $X$ to $G^{\star (n+1)}$ then it is 
	immediate from Lemma~\ref{L_2601_XRP} that $\RD(\alpha) \leq n$. We need to 
	show the reverse inequality. Set $n = \indG(Y)$. 	
	By Lemma~\ref{L_2601_XRP}, we need to show that there does not exist an 
	equivariant unital map from $C(G^{\star n})$ to the central sequence 
	algebra of $C(Y)$. 
	
	Set $X = G^{\star n}$. We denote by $\beta$ the canonical action of $G$ on 
	$C(X)$. Note that $X$ is a finite simplicial complex, endowed with a free 
	action of $G$. Use \cite[Theorem 6.1$'$]{Mostow} to pick some $N \in \N$ such 
	that there exists an action of $G$ on $\R^N$ such that $X$ embeds 
	equivariantly in $\R^N$. We fix such an embedding.  It follows from 
	\cite[Theorem 2.2]{Jaworowski} that there exists an open $G$-invariant 
	neighborhood $U$ of $X$ such that there is an equivariant retraction $r 
	\colon U \to X$. Fix such an open set $U$ and retraction $r$.
	As $X$ is an ANR, it follows from \cite[Proposition 2.11]{blackadar-shape-theory} that $C(X)$ is semiprojective in the category of commutative unital $C^*$-algebras; therefore, any unital homomorphism $C(X) \to F_{\infty}^{(\alpha)}(C(Y))$ can be lifted to a unital homomorphism $C(X) \to l^{\infty,(\alpha)}(C(Y))$. As we are starting with a  unital homomorphism $C(X) \to F_{\infty}^{(\alpha)}(C(Y))$  which is $G$-equivariant, any lifting as above will be asymptotically $G$-equivariant, meaning that for any finite set $F \subseteq C(X)$ and for any $\ep>0$ there exists a unital homomorphism $\varphi \colon C(X) \to C(Y)$ such that for all $g \in G$ and for all $f \in F$ we have $\| ( \beta_{g^{-1}} \circ \varphi \circ \alpha_g ) (f) - \varphi(f) \| < \ep$. 
	
	Let $F$ be a finite set of generators for $C(X)$. For $\varphi$ as above, 
	let us denote by $T \colon Y \to X$ the continuous function which satisfies 
	$\varphi(f) = f \circ T$. 
	We write $(g,x) \to g \cdot x$ and $(g,y) \to g \cdot y$, for $g \in G$, $x 
	\in X$, and $y \in Y$, for the actions of $G$ on $X$ and $Y$ corresponding 
	to $\alpha$ and $\beta$.
	 As the spaces are different, there is no risk of confusion. Find a 
	 sufficiently small $\ep_0>0$ such that the condition
	 $\| ( \beta_{g^{-1}} \circ \varphi \circ \alpha_g ) (f)- \varphi(f) \| < \ep_0$ for all $f \in F$ and for all $g \in G$ implies that for all $y \in Y$ there exists an open ball $D_y$ such that $T(y) \in D_y \subseteq U$ and such that    $g^{-1} \cdot T(g \cdot y) \in D_y$  for all $g \in G$. We denote the normalized Haar measure on $G$ by $dg$. Because $D_y$ is convex for any $y \in Y$, it follows that $\int_G g^{-1} \cdot T(g \cdot y) dg \in D_y \subseteq U$. Now,
	set $\tilde{T}(y) = r \left ( \int_G g^{-1} \cdot T(g \cdot y) dg \right )$. Then $\tilde{T}$ is an equivariant continuous map from $Y$ to $X$. However, such maps do not exist; see  \cite[Theorem 2.2]{Passer} and \cite[Corollary 3.1]{Volovikov}. 
\end{proof}

\begin{cor}
		Let $n \in \N$. Let $\alpha$ be the canonical action of $G$ on $C(G^{\star (n+1)})$. Then $\RD(\alpha) = n$.
\end{cor}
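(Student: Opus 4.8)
The plan is to deduce this immediately from Theorem~\ref{thm:commutative-case}, so the only real work is to compute the $G$-index $\indG(G^{\star(n+1)})$ and check that it equals $n+1$. Applying the theorem with $Y = G^{\star(n+1)}$ gives $\RD(\alpha) = \indG(G^{\star(n+1)}) - 1$, and therefore it suffices to show $\indG(G^{\star(n+1)}) = n+1$.

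For the upper bound I would observe that the identity map on $G^{\star(n+1)}$ is a $G$-equivariant continuous map into $G^{\star(n+1)}$, which by the definition of the $G$-index gives $\indG(G^{\star(n+1)}) \leq n+1$ at once.

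For the lower bound I need to rule out the existence of a $G$-equivariant continuous map from $G^{\star(n+1)}$ into $G^{\star m}$ for any $m \leq n$. It suffices to treat the case $m = n$: given any equivariant map into $G^{\star m}$ with $m < n$, composing with the canonical equivariant inclusion $G^{\star m} \hookrightarrow G^{\star n}$ would produce an equivariant map into $G^{\star n}$, so nonexistence for $m = n$ forces nonexistence for all $m \leq n$. The nonexistence of an equivariant map $G^{\star(n+1)} \to G^{\star n}$ is precisely the Borsuk--Ulam type statement recorded in \cite[Theorem 2.2]{Passer} and \cite[Corollary 3.1]{Volovikov}, which was already invoked at the end of the proof of Theorem~\ref{thm:commutative-case}. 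This yields $\indG(G^{\star(n+1)}) \geq n+1$, and combined with the upper bound gives $\indG(G^{\star(n+1)}) = n+1$, whence $\RD(\alpha) = n$.

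There is essentially no serious obstacle here, since the substantive content is entirely contained in Theorem~\ref{thm:commutative-case}; the index computation is routine once the Borsuk--Ulam type nonexistence result is in hand. The only point requiring a moment's care is the reduction of the case $m < n$ to $m = n$ via the standard equivariant inclusion of joins, which is straightforward.
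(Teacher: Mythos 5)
Your proof is correct and is exactly the argument the paper intends: the corollary is stated without a separate proof precisely because it follows from Theorem~\ref{thm:commutative-case} once one knows $\indG(G^{\star(n+1)}) = n+1$, whose nontrivial half is the Borsuk--Ulam type nonexistence of an equivariant map $G^{\star(n+1)} \to G^{\star n}$ from \cite[Theorem 2.2]{Passer} and \cite[Corollary 3.1]{Volovikov}, already cited in the theorem's proof. Your added care in reducing the case $m < n$ to $m = n$ via the equivariant inclusion of joins is a correct (and worthwhile) explicit treatment of a point the paper leaves implicit in the definition of the index.
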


\begin{rmk}
	After the preprint was posted, Jianchao Wu pointed out to
	 us that Theorem~\ref{thm:commutative-case} can also be deduced from a 
	 combination of
	  Theorem 4.5 and Theorem 5.13 of \cite{GHTW}.
\end{rmk}

\end{document}